\newif\ifdetails
\newcommand{\DETAIL}[1]%
{\ifdetails\par\fbox{\begin{minipage}{0.9\linewidth}\textit{Detail:}
      #1\end{minipage}}\par\fi}
\newcommand{\TODO}[1]%
{\ifdetails\par\fbox{\begin{minipage}{0.9\linewidth}\textbf{TODO:}
      #1\end{minipage}}\par\fi}
\newtheorem{lemma}{Lemma}
\newtheorem{proposition}[lemma]{Proposition}
\newtheorem{theorem}[lemma]{Theorem}
\theoremstyle{remark}
\newtheorem{remark}{Remark}
\newtheorem{definition}[lemma]{Definition}
\newtheorem{problem}{Problem}
\DeclareMathOperator{\N}{N}
\newcommand{\old}[1]{{}}
\title[Cut vertex and unicyclic graphs with the maximum number of \ldots]{Cut vertex and unicyclic graphs with the maximum number of connected induced subgraphs}
\author{Audace A. V. Dossou-Olory}
\thanks{The work is supported in part by the National Research Foundation of South Africa, grant 118521. Part of the work was done during the author's visit to the African Institute for Mathematical Sciences (AIMS) ZA, Dec. 2019 -- Jan. 2020}
\address{Audace A. V. Dossou-Olory \\ Department of Mathematics and Applied Mathematics \\ University of Johannesburg \\ P.O. Box 524, Auckland Park, Johannesburg 2006 \\ South Africa}
\email{audace@aims.ac.za}
\subjclass[2010]{Primary 05C30; secondary 05C35, 05C38}
\keywords{unicyclic graph, cut vertex, connected induced subgraph, extremal structures}
\begin{document}

\begin{abstract}		
Cut vertices are often used as a measure of nodes' importance within a network. They are those nodes whose failure disconnects a graph. Let $\N(G)$ be the number of connected induced subgraphs of a graph $G$. In this work, we investigate the maximum of $\N(G)$ where $G$ is a unicyclic graph with $n$ nodes of which $c$ are cut vertices. For all valid $n,c$, we give a full description of those maximal (that maximise $\N(.)$) unicyclic graphs. It is found that there are generally two maximal unicyclic graphs. For infinitely many values of $n,c$, however, there is a unique maximal unicyclic graph with $n$ nodes and $c$ cut vertices. In particular, the well-known negative correlation between the number of connected induced subgraphs of trees and the Wiener index (sum of distances) fails for unicyclic graphs with $n$ nodes and $c$ cut vertices: for instance, the maximal unicyclic graph with $n=3,4\mod 5$ nodes and $c=n-5>3$ cut vertices is different from the unique graph that was shown by Tan et al.~[{\em The Wiener index of unicyclic graphs given number of pendant vertices or cut vertices}. J. Appl. Math. Comput., 55:1--24, 2017] to minimise the Wiener index. Our main characterisation of maximal unicyclic graphs with respect to the number of connected induced subgraphs also applies to unicyclic graphs with $n$ nodes, $c$ cut vertices and girth at most $g>3$, since it is shown that the girth of every maximal graph with $n$ nodes and $c$ cut vertices cannot exceed $4$.
\end{abstract}

\maketitle

\section{Introduction and main result}\label{Intro:main}
Real-world graphs are extremely large, which poses great challenges for efficiently analy-sing their structural properties. Subgraphs of a graph are important substructures that can reveal valuable information about the underlying graph~\cite{Alokshiya2019}. For instance, subgraphs can be used to identify building blocks and extract the functional properties of complex networks~\cite{milo2002network}. Subgraph enumeration is therefore important for describing large networks, and some algorithms have been invented for efficiently enumerating all connected induced subgraphs of a $n$-vertex graph; see~\cite{Alokshiya2019,Avis2015,Maxwell2014,Uno2015,YanYeh2006}. For general graphs, the fastest known algorithm in this regard has a linear time delay and appeared very recently in~\cite{Alokshiya2019} with an application to certain protein-protein interaction networks. Although we shall not deal with algorithms in this paper, our interest is still related to enumeration: our goal is to know the maximum number of connected induced subgraphs that a given graph can contain. In many applications however, one is only interested in connected subgraphs of graphs that meet certain structural constraints~\cite{Alokshiya2019,bjorklund2012traveling}. Thus, over the set of all $n$-vertex graphs (or unicyclic graphs), those graphs that extremise the number of connected induced subgraphs are characterised in~\cite{Audacegenral2018}. Paper~\cite{AudaceGirth2018} extends the work done in~\cite{Audacegenral2018} by taking into account other structural parameters such that number of cycles, girth, and number of pendant vertices. Specifically, in~\cite{AudaceGirth2018} the author gave a partial characterisation of the $n$-vertex graphs with $d$ cycles, girth $g$ and $k$ pendant vertices that have the maximum number of connected induced subgraphs; for the special case $d=1$ (i.e. unicyclic graphs) the complete structure of those `maximal' unicyclic graphs was provided. Other extremal results with respect to the number of connected induced subgraphs were also obtained for a given number of vertices and any combination of the above mentioned parameters. 

\medskip
A \emph{cut vertex} of a graph $G$ is a vertex whose deletion increases the number of (connected) components of $G$. Cut vertices are often used as a measure of nodes' importance within a network. Their failure disconnects the network and downgrades its performance (e.g. blocks data transmission). In the recent preprint~\cite{AudaceCut2019}, the author determined the maximum number of connected induced subgraphs in a connected graph with order $n$ and $c$ cut vertices, and also described the structure of those graphs attaining the bound. Moreover, he showed, among other things, that the cycle has the minimum number of connected induced subgraphs among all cut vertex-free connected graphs. Our goal is to extend this analysis further to unicyclic graphs. In this paper, we shall study the maximum number of connected induced subgraphs in a unicyclic graph with order $n$ of which $c$ are cut vertices.

Our main motivation for this study stems from \cite[Theorem~11]{AudaceCut2019}, which states that for general connected graphs with order $n>1$ and $c$ cut vertices, the unique graph that realises the maximum number of connected induced subgraphs is essentially the complete graph $K_{n-c}$ of order $n-c$ with one path attached to each of the vertices of $K_{n-c}$ (see \cite{AudaceCut2019} for a more precise description). Since this extremal graph contains several cycles, it would be natural to impose an upper bound on the number of cycles and carry out the same study. The focus of this paper will be on unicyclic graphs; as we shall see, our main result also applies to unicyclic graphs whose girth is at most $g>3$. We mention that the work of Tan et al.~\cite{tan2017wiener} on the Wiener index (sum of distances between all unordered vertex pairs~\cite{Entringer1976,Wiener1947}) also inspired us, as some of the constructive techniques used in~\cite{tan2017wiener} will be adapted to our current setting.

\medskip
Let us first state the main result of this paper. Before getting to the statement, we need to give some definitions. Fix the integers $n>3$ and $0<c<n-2$.
\begin{enumerate}[i)]
\item Let $r$ be the residue of $n-3$ modulo $n-c$ and $q=\lfloor (n-3)/(n-c) \rfloor$. Set $m_j=q+1$ for all $1\leq j \leq r$, and $m_j=q$ for all $r+1\leq j \leq n-c$. Then we define $\Delta_{n,c}$ to be the graph constructed from the triangle $v_0v_1v_2$ by attaching $n-c-2$ pendant paths of respective lengths $m_1,m_2,\ldots,m_{n-c-2}$ at $v_2$, one pendant path of length $m_{n-c-1}$ at $v_1$, and one pendant path of length $m_{n-c}$ at $v_0$.
\item Set $m=\lfloor n/4 \rfloor$ and let $r$ be the residue of $n$ modulo $4$. Then the graph $\Omega_{n,n-4}$ with order $n$ and $n-4$ cut vertices is constructed from the square $v_0v_1v_2v_3$ by attaching the pendant paths of orders $m_0,m_1,m_2,m_3$ at $v_0,v_1,v_2,v_3$, respectively, where $(m_0,m_1,m_2,m_3)$ is equal to
\begin{align*}
(m,m,m,m),(m+1,m,m,m),(m+1,m+1,m,m),(m+1,m+1,m+1,m)
\end{align*}
when $r$ is equal to $0,1,2,3$, respectively.
\item Let $n>7$ such that $n+k=5m$ for some integer $m$ and some $k\in \{0,1,2\}$. Then the graph $\Omega_{n,n-5}$ with order $n$ and $n-5$ cut vertices is constructed from the square $v_0v_1v_2v_3$ by attaching the pendant paths of orders $m_0,m_1,m_2,m_3$ at $v_0,v_1,v_2,v_3$, respectively, and another pendant of order $m$ at $v_2$, where $(m_0,m_1,m_2,m_3)$ is equal to
\begin{align*}
(m,m,m+1,m),(m,m,m,m),(m-1,m,m,m)
\end{align*}
when $k$ is equal to $0,1,2$, respectively.
\end{enumerate}

{\sc Main result:}
Let the integers $n>3$ and $0< c<n-2$ be given, and $G$ be a unicyclic graph with order $n$ and $c$ cut vertices that maximises the number of connected induced subgraphs. Then the following hold:
\begin{itemize}
\item $G$ is only isomorphic to $\Delta_{n,c}$ if $c=n-3$ or $c<n-5$;
\item $G$ is only isomorphic to $\Omega_{n,c}$ if $c=n-4>1$, or $c=n-5>3$ and $n=3,4\mod 5$;
\item $G$ is isomorphic to both $\Delta_{n,c}$ and $\Omega_{n,c}$ if $c=n-5=3$, or $c=n-4 =1$, or
		\begin{align*}
		\textrm{
			$c=n-5>0$ and $ n=0\mod 5$.}
		\end{align*}
\end{itemize}

\medskip
In contrast to the situation for general graphs where there are relatively few extremal results on this invariant, \emph{number of connected induced subgraphs}, for trees it has been studied in more detail and numerous results are available. For instance, Chung et al.~\cite{Chung1981} estimated the minimum size of that tree $T_n$ with the property that any $n$-vertex tree occurs as subtree of $T_n$; Sz\'ekely and Wang~\cite{szekely2005subtrees,szekely2007binary} determined the structure of both $n$-vertex trees and $n$-leaf binary trees that extremise the number of subtrees; Li and Wang~\cite{LiWang} (resp. Andriantiana et al.~\cite{GreedyWagner}) found the tree with order $n$ and $k$ pendant vertices that minimise (resp. maximise) the number of subtrees; Yan and Yeh~\cite{YanYeh2006} determine the unique tree with maximum degree at least $\Delta$ that has the smallest number of subtrees, and the unique tree with diameter at least $d$ that has the greatest number of subtrees.

\medskip
The content within this paper is structured as follows: We give formal notation and  terminologies in Section~\ref{sec:prelim}. Section~\ref{Sec:Mainpart} contains the technical parts of our approach to the main result. In the process of proving our main theorem, Section~\ref{Sec:Mainpart} is divided into 7 steps in a chronological way. We start by proving in Subsection~\ref{Subsec:Onebranchvertex} that there can only be at most one \emph{branching} vertex in every graph with order $n$ and $c$ cut vertices that has the maximum number of connected induced subgraphs. In Subsection~\ref{Subsec:girth3or4} we show that the girth of every `maximal' graph (that maximises) cannot exceed $4$. We then prove in Subsection~\ref{Subsec:branchvertexcycle} that if there is a branching vertex in a maximal graph $G$, then that vertex must belong to the cycle of $G$. We show in Subsection~\ref{Subsec:almostequalpendantpath} that in a maximal graph whose circle is $C$, every two pendants paths at a branching vertex or adjacent vertices of $C$ must have orders' difference at most $1$. We then proceed to describe in Subsections~\ref{Subsec:girth3structure} and~\ref{Subsec:girth4structure} the full structure of those maximal graphs according to whether the order of $C$ is $3$ or $4$. Finally, Subsection~\ref{Subsec:maintheorem} carries a proof of our main theorem, which characterises those $n$-vertex graphs with $c$ cut vertices that have the greatest number of connected induced subgraphs. Section~\ref{Sec:Conclude} concludes the work and points out a connection of our main result to the Wiener index (sum of distances).

\medskip
All graphs considered in this paper are simple, finite and connected.

\section{Preliminaries}\label{sec:prelim}
The \emph{trivial} graph is that with only one vertex. For a graph $G$ and $u,v \in V(G)$, we shall denote by $\N(G),~\N(G)_v,~\N(G)_{u,v}$ the number of connected induced subgraphs of $G$, those that involve $v$, and those that involve both $u$ and $v$, respectively. On the other hand, $G-v$ (resp. $G-uv$) will denote the graph that results from deleting vertex $v$ (resp. edge $uv$) in $G$, and we simply write $G-u-v$ instead of $(G-u)-v$. More generally, $G-S$ represents the graph obtained from $G$ by deleting all elements of $S$. By a $u-v$ path in $G$ we mean a path that connects vertices $u$ and $v$ in $G$. The number of cut vertices of $G$ will be denoted by $c(G)$ (or simply $c$, when the underlying graph is clear from context). It is a straightforward fact that $c\leq n-2$ holds for every non-trivial graph with order $n$ and $c$ cut vertices, with equality only for the path. As usual, the $n$-vertex path will be denoted by $P_n$. We mention that $\N(P_n)=n(n+1)/2$, and $\N(P_n)_w=n$ if $w$ is an endvertex of $P_n$. 

A vertex of outdegree greater than $1$ in a rooted tree $T$ will be called a \emph{branching vertex} of $T$. If $T$ is a rooted tree and $v \in V(T)$, then we define $T[v]$ to be the \emph{fringe subtree} of $T$ rooted at $v$. In other words, $T[v]$ consists of $v$ and all its descendants in $T$. If $v \in V(G)$, then a \emph{pendant path} at $v$ is a $v-u$ path $P$ with the property that $u$ is a pendant vertex of $G$ and all internal vertices of $P$ have degree $2$ in $G$: we shall refer to $u$ as the \emph{free endvertex} of $P$, even when $P$ is trivial (and $u$ is not a pendant vertex). Moreover, a pendant path of order $2$ will simply be called a \emph{pendant edge}. By $C_n: v_0,v_1,v_2,\ldots, v_{n-1}$ we mean the cycle whose vertices are $v_0,v_1,v_2,\ldots, v_{n-1}$ in this order, i.e. $v_{n-1}$ is adjacent to $v_0$, and $v_j$ is adjacent to $v_{j+1}$ for all $0\leq j \leq n-2$. We shall refer to $C_3$ and $C_4$ as the \emph{triangle} $v_0v_1v_2$ and the \emph{square} $v_0v_1v_2v_3$, respectively. The girth of a graph $G$ is the minimum order among the cycles (if any) of $G$. 

A connected graph that has only one cycle is called a unicyclic graph. Every unicyclic graph $G$ whose girth is $g$ can be constructed from the cycle $C_g$ and some (pairwise vertex disjoint) rooted trees $T_0,\ldots,T_{g-1}$ by identifying the root of $T_j$ with vertex $v_j\in V(C_g)$ for all $0\leq j \leq g-1$; see Figure~\ref{ShapeUnic} for a picture. 
\begin{figure}[!h]\centering
	\resizebox{0.3\textwidth}{!}{%
	\begin{tikzpicture}[line cap=round,line join=round,>=triangle 45,x=1.0cm,y=1.0cm]
	\draw (8.38,8.27) node[anchor=north west] {$v_0$};
	\draw (7.57,6.9) node[anchor=north west] {$v_1$};
	\draw (5.83,6.84) node[anchor=north west] {$v_2$};
	\draw (7.22,9.68) node[anchor=north west] {$v_{g-1}$};
	\draw (9.3,7.86) node[anchor=north west] {$T_0$};
	\draw (8.2,6.1) node[anchor=north west] {$T_1$};
	\draw (4.5,6.52) node[anchor=north west] {$T_2$};
	\draw (8.2,10.6) node[anchor=north west] {$T_{g-1}$};
	\draw (9,8)-- (8,6.36);
	\draw (6.01,6.37)-- (8,6.36);
	\draw (6.01,6.37)-- (5,8);
	\draw (9,8)-- (8.03,9.67);
	\draw (8.03,9.67)-- (5.91,9.66);
	\draw [rotate around={-88.78:(8.01,10.33)},dash pattern=on 1pt off 1pt] (8.01,10.33) ellipse (0.7cm and 0.2cm);
	\draw [rotate around={0.07:(9.64,8)},dash pattern=on 1pt off 1pt] (9.64,8) ellipse (0.66cm and 0.18cm);
	\draw [rotate around={-89.87:(8,5.68)},dash pattern=on 1pt off 1pt] (8,5.68) ellipse (0.72cm and 0.22cm);
	\draw [rotate around={36.22:(5.5,6)},dash pattern=on 1pt off 1pt] (5.5,6) ellipse (0.67cm and 0.24cm);
	
	\fill [color=black] (9,8) circle (2.0pt);
	\fill [color=black] (8,6.36) circle (2.0pt);
	\fill [color=black] (6.01,6.37) circle (2.0pt);
	\fill [color=black] (5,8) circle (0.5pt);
	\fill [color=black] (8.03,9.67) circle (2.0pt);
	\fill [color=black] (5.91,9.66) circle (0.5pt);
	\fill [color=black] (5.25,9.01) circle (0.5pt);
	\fill [color=black] (5.45,9.34) circle (0.5pt);
	\fill [color=black] (5.03,8.73) circle (0.5pt);
	\fill [color=black] (8,11) circle (0.5pt);
	\fill [color=black] (10.28,8) circle (0.5pt);
	\fill [color=black] (8,5) circle (0.5pt);
	\fill [color=black] (4.99,5.62) circle (0.5pt);
	\fill [color=black] (8.14,10.85) circle (0.5pt);
	\fill [color=black] (10.15,7.88) circle (0.5pt);
	\fill [color=black] (7.84,5.17) circle (0.5pt);
	\fill [color=black] (5.16,5.57) circle (0.5pt);
	\end{tikzpicture}}
	\caption{The shape of a unicyclic graph with girth $g$: $T_0,\ldots,T_{g-1}$ are trees rooted at $v_0,v_1,\ldots,v_{g-1}$, respectively.}\label{ShapeUnic}
\end{figure}
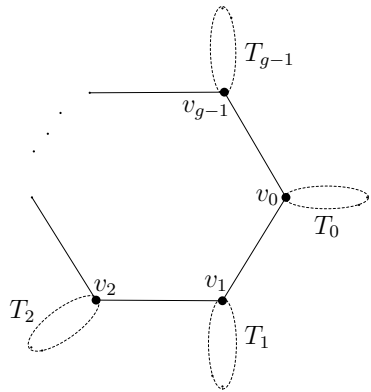

\section{Prescribed number of cut vertices}\label{Sec:Mainpart}
Given two integers $n>2$ and $0\leq c<n-2$, we define $\mathcal{U}(n,c)$ to be the set of all unicyclic graphs with $n$ nodes of which $c$ are cut vertices. Clearly, $\mathcal{U}(n,0)=\{C_n\}$. Throughout the paper, we then assume that $n>3$ and $0< c<n-2$ are fixed integers. By a \emph{maximal graph}, we always mean a graph $G\in \mathcal{U}(n,c)$ that has the maximum number of connected induced subgraphs. The cycle of every maximal graph will be denoted by $C$. 

\medskip
For the purpose of presenting our main result, we need to go through some preparations.

\subsection{At most one branching vertex}\label{Subsec:Onebranchvertex}
Let $G$ be a unicyclic graph whose shape is depicted in Figure~\ref{ShapeUnic}. Then we shall refer to the branching vertices of the rooted trees $T_0,\ldots,T_{g-1}$ as the branching vertices of $G$. The following lemma will be used to show that there can only be at most one branching vertex in every maximal graph.

The first part of Lemma~\ref{singleBranch} below can be found in~\cite{AudaceGirth2018} (the actual formulation in~\cite{AudaceGirth2018} is a bit different but yet equivalent to Lemma~\ref{singleBranch}).

\begin{lemma}\label{singleBranch}
	Let $L,M,R$ be three non-trivial connected graphs whose vertex sets are pairwise disjoints. Let $l \in V(L),~ r\in V(R)$ and $u,v \in V(M)$ be fixed vertices such that $u \neq v$. Denote by $G$ the graph obtained from $L,M,R$ by identifying $l$ with $u$, and $r$ with $v$. Similarly, let $G'$ (resp. $G''$) be the graph obtained from $L,M,R$ by identifying both $l,r$ with $u$ (resp. both $l,r$ with $v$); see Figure~\ref{diagGGpGpp} for a diagram of these graphs.
	\begin{figure}[htbp]\centering
		\definecolor{qqqqff}{rgb}{0.,0.,1.}
		\resizebox{0.7\textwidth}{!}{%
		\begin{tikzpicture}[line cap=round,line join=round,>=triangle 45,x=1.0cm,y=1.0cm]
		rectangle (17.25469285535401,15.543118314192673);
		\draw [dash pattern=on 2pt off 2pt] (8.02,12.78)-- (10.02,12.78);
		\draw [dash pattern=on 2pt off 2pt] (10.02,12.78)-- (10.,12.);
		\draw [dash pattern=on 2pt off 2pt] (10.,12.)-- (10.02,11.26);
		\draw [dash pattern=on 2pt off 2pt] (10.02,11.26)-- (8.,11.26);
		\draw [dash pattern=on 2pt off 2pt] (8.,11.26)-- (8.,12.);
		\draw [dash pattern=on 2pt off 2pt] (8.,12.)-- (8.02,12.78);
		\draw [dash pattern=on 2pt off 2pt] (6.48,12.56)-- (8.,12.);
		\draw [dash pattern=on 2pt off 2pt] (6.48,11.48)-- (8.,12.);
		\draw [dash pattern=on 2pt off 2pt] (6.48,12.56)-- (6.48,11.48);
		\draw [dash pattern=on 2pt off 2pt] (10.,12.)-- (11.48,12.48);
		\draw [dash pattern=on 4pt off 4pt] (10.,12.)-- (11.46,11.46);
		\draw [dash pattern=on 2pt off 2pt] (11.48,12.48)-- (11.46,11.46);
		\draw (8.002875389073733,12.308886980787792) node[anchor=north west] {$l,u$};
		\draw (9.126357554291436,12.19457622267538) node[anchor=north west] {$v,r$};
		\draw (6.736460949518799,12.287683803799489) node[anchor=north west] {$L$};
		\draw (10.70036563986977,12.304751529462257) node[anchor=north west] {$R$};
		\draw [dash pattern=on 2pt off 2pt] (4.,10.)-- (6.,10.);
		\draw [dash pattern=on 2pt off 2pt] (6.,10.)-- (5.98,9.22);
		\draw [dash pattern=on 2pt off 2pt] (5.98,9.22)-- (6.,8.48);
		\draw [dash pattern=on 2pt off 2pt] (6.,8.48)-- (3.98,8.48);
		\draw [dash pattern=on 2pt off 2pt] (3.98,8.48)-- (3.98,9.22);
		\draw [dash pattern=on 2pt off 2pt] (3.98,9.22)-- (4.,10.);
		\draw [dash pattern=on 2pt off 2pt] (3.12,10.28)-- (3.98,9.22);
		\draw [dash pattern=on 2pt off 2pt] (2.6,9.6)-- (3.98,9.22);
		\draw [dash pattern=on 2pt off 2pt] (3.12,10.28)-- (2.6,9.6);
		\draw (4.031552359486242,9.502599906980058) node[anchor=north west] {$l,u,r$};
		\draw (6.006612285785838,9.41691066509247) node[anchor=north west] {$v$};
		\draw (3.008420807842294,10.0050302314765) node[anchor=north west] {$L$};
		\draw (3.087042324067116,8.845683517584332) node[anchor=north west] {$R$};
		\draw [dash pattern=on 2pt off 2pt] (12.,10.)-- (14.,10.);
		\draw [dash pattern=on 2pt off 2pt] (14.,10.)-- (13.98,9.22);
		\draw [dash pattern=on 2pt off 2pt] (13.98,9.22)-- (14.,8.48);
		\draw [dash pattern=on 2pt off 2pt] (14.,8.48)-- (11.98,8.48);
		\draw [dash pattern=on 2pt off 2pt] (11.98,8.48)-- (11.98,9.22);
		\draw [dash pattern=on 2pt off 2pt] (11.98,9.22)-- (12.,10.);
		\draw [dash pattern=on 2pt off 2pt] (13.98,9.22)-- (14.9,10.32);
		\draw [dash pattern=on 4pt off 4pt] (13.98,9.22)-- (15.42,9.66);
		\draw [dash pattern=on 2pt off 2pt] (14.9,10.32)-- (15.42,9.66);
		\draw (12.000266144323993,9.432599906980058) node[anchor=north west] {$u$};
		\draw (12.761465421630708,9.532775213766935) node[anchor=north west] {$v,l,r$};
		\draw (14.542891846445562,10.147962505813732) node[anchor=north west] {$R$};
		\draw [dash pattern=on 2pt off 2pt] (2.66,8.42)-- (3.98,9.22);
		\draw [dash pattern=on 2pt off 2pt] (3.44,7.86)-- (3.98,9.22);
		\draw [dash pattern=on 2pt off 2pt] (2.66,8.42)-- (3.44,7.86);
		\draw (8.654102536581874,12.838560337733874) node[anchor=north west] {$M$};
		\draw (4.70415799076956,10.0550302314765) node[anchor=north west] {$M$};
		\draw (12.718182533719721,10.0550302314765) node[anchor=north west] {$M$};
		\draw [dash pattern=on 2pt off 2pt] (13.98,9.22)-- (15.44,8.52);
		\draw [dash pattern=on 2pt off 2pt] (14.78,7.92)-- (13.98,9.22);
		\draw [dash pattern=on 2pt off 2pt] (15.44,8.52)-- (14.78,7.92);
		\draw [dash pattern=on 2pt off 2pt] (15.44,8.52)-- (14.78,7.92);
		\draw [dash pattern=on 2pt off 2pt] (15.44,8.52)-- (14.78,7.92);
		\draw (14.521338055883508,8.817237308146387) node[anchor=north west] {$L$};
		\draw (8.682548746019819,11.18282315480661) node[anchor=north west] {$G$};
		\draw (4.618293442095093,8.37342849987477) node[anchor=north west] {$G'$};
		\draw (12.675250259382487,8.330496225537535) node[anchor=north west] {$G''$};
		
		\draw [fill=black] (8.02,12.78) circle (0.5pt);
		\draw [fill=black] (10.02,12.78) circle (0.5pt);
		\draw [fill=black] (8.,11.26) circle (0.5pt);
		\draw [fill=black] (10.02,11.26) circle (0.5pt);
		\draw [fill=black] (8.,12.) circle (2.0pt);
		\draw [fill=black] (10.,12.) circle (2.0pt);
		\draw [fill=black] (6.48,12.56) circle (0.5pt);
		\draw [fill=black] (6.48,11.48) circle (0.5pt);
		\draw [fill=qqqqff] (11.48,12.48) circle (0.5pt);
		\draw [fill=qqqqff] (11.46,11.46) circle (0.5pt);
		\draw [fill=black] (4.,10.) circle (0.5pt);
		\draw [fill=black] (6.,10.) circle (0.5pt);
		\draw [fill=black] (3.98,8.48) circle (0.5pt);
		\draw [fill=black] (6.,8.48) circle (0.5pt);
		\draw [fill=black] (3.98,9.22) circle (2.0pt);
		\draw [fill=black] (5.98,9.22) circle (2.0pt);
		\draw [fill=black] (3.12,10.28) circle (0.5pt);
		\draw [fill=black] (2.6,9.6) circle (0.5pt);
		\draw [fill=black] (12.,10.) circle (0.5pt);
		\draw [fill=black] (14.,10.) circle (0.5pt);
		\draw [fill=black] (11.98,8.48) circle (0.5pt);
		\draw [fill=black] (14.,8.48) circle (0.5pt);
		\draw [fill=black] (11.98,9.22) circle (2.0pt);
		\draw [fill=black] (13.98,9.22) circle (2.0pt);
		\draw [fill=black] (15.44,8.52) circle (0.5pt);
		\draw [fill=black] (14.78,7.92) circle (0.5pt);
		\draw [fill=qqqqff] (14.9,10.32) circle (0.5pt);
		\draw [fill=qqqqff] (15.42,9.66) circle (0.5pt);
		\draw [fill=black] (3.44,7.86) circle (0.5pt);
		\draw [fill=black] (2.66,8.42) circle (0.5pt);
		\end{tikzpicture}}
		\caption{The graphs $G,G',G''$ described in Lemma~\ref{singleBranch}.}\label{diagGGpGpp}
	\end{figure}
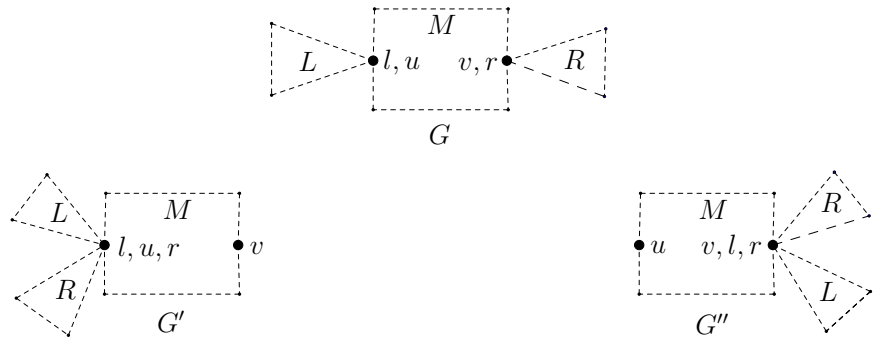

Then it holds that
	\begin{align*}
	\N(G') > \N(G) ~~ \text{or} ~~ \N(G'') > \N(G)\,.
	\end{align*}
Furthermore, if both $u$ and $v$ are cut vertices of the graph $M$, then we have $$c(G')=c(G'')=c(G).$$
\end{lemma}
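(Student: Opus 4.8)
Since the first (inequality) part is already known and quoted above, the plan is to establish only the \emph{Furthermore} assertion, the equality of cut-vertex counts. The whole argument rests on a single elementary observation about identifying two graphs at a vertex, which I would use repeatedly. Suppose $A$ and $B$ are non-trivial connected graphs meeting in exactly one vertex $x$, and put $H=A\cup B$. Then (i) a vertex $w\in V(A)\setminus\{x\}$ is a cut vertex of $H$ if and only if it is a cut vertex of $A$ (deleting $w$ can only detach the part of $A$ hanging off it, while the side containing $x$ stays joined to $B$), and symmetrically for $B$; and (ii) the shared vertex $x$ is always a cut vertex of $H$, since $H-x$ is the disjoint union of the non-empty graphs $A-x$ and $B-x$. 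Both facts are immediate from the definition of a cut vertex, and I would record them as a preliminary claim.

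Next I would read off the cut-vertex set of each of $G,G',G''$ by building it through two successive identifications and applying the observation twice. For $G$, first form the graph obtained from $M$ and $L$ by identifying $l$ with $u$; by the observation its cut vertices are those of $M$ other than $u$, those of $L$ other than $l$, and the vertex $u$ itself. Then identify $r$ with $v$ (legitimate since $u\ne v$), which adds the cut vertices of $R$ other than $r$ and the vertex $v$. Writing $a$ for the number of cut vertices of $L$ distinct from $l$ and $b$ for the number of cut vertices of $R$ distinct from $r$, this shows that the cut vertices of $G$ are exactly the cut vertices of $M$ other than $u,v$, together with the $a+b$ internal cut vertices of $L$ and $R$ and the two glue vertices $u,v$. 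The identical two-step computation applied to $G'$ (identify $l$, then $r$, with $u$) and to $G''$ (identify $l$, then $r$, with $v$) yields analogous descriptions, the only glue vertex now being $u$, respectively $v$.

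The hypothesis enters at the last step. Since $u$ and $v$ are cut vertices of $M$, every glue vertex produced by the identifications already lies in the cut-vertex set of $M$; so in each case the extra contribution $\{u,v\}$ (respectively $\{u\}$, $\{v\}$) is nothing new, and one gets that the cut vertices of $G$ are precisely those of $M$ together with the $a$ internal cut vertices of $L$ and the $b$ internal cut vertices of $R$. The same holds verbatim for $G'$ and $G''$, so all three counts equal $c(M)+a+b$ and $c(G)=c(G')=c(G'')$ follows. It is worth noting that $l$ being a cut vertex of $L$ (or $r$ of $R$) causes no double counting: $l,r$ are never among the vertices counted by $a,b$, and their images $u,v$ are counted once, through $M$.

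The bookkeeping at the glue vertices is the only delicate point, and it is exactly where the hypothesis is indispensable: were $v$, say, not a cut vertex of $M$, then attaching $R$ at $v$ would make $v$ a cut vertex of $G$, whereas in $G'$, where $R$ has been moved to $u$, the vertex $v$ need not be a cut vertex at all, and the counts could genuinely differ. I therefore expect the only (modest) obstacle to be stating the vertex-identification observation cleanly enough that its two-fold application to each of $G,G',G''$ is unambiguous, and verifying that each of $u$ and $v$ is counted exactly once in every case.
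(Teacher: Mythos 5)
Your proof of the ``Furthermore'' part is correct and rests on essentially the same idea as the paper's, namely that identifying two connected graphs at a single vertex changes cut-vertex status only at the glue vertex; you simply package it more systematically. The paper asserts that all vertices other than $u,v$ preserve their status and then checks $u$ and $v$ by hand in each of $G$, $G'$, $G''$, whereas you prove the gluing observation once and read off the full cut-vertex set of each graph as the cut vertices of $M$ together with the internal cut vertices of $L$ and $R$, the hypothesis guaranteeing that the glue vertices contribute nothing new to the count. That bookkeeping is sound (each of $u,v$ is counted exactly once in every case, as you note), and your closing remark correctly pinpoints why the hypothesis that both $u$ and $v$ are cut vertices of $M$ cannot be dropped. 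The only caveat concerns the inequality part, which you skip entirely. Deferring it to the cited reference is defensible, since the paper itself states that this part can be found in the earlier work; but the paper's own proof does not take the disjunction wholesale from the reference --- it quotes the two product formulas for $\N(G')-\N(G)$ and $\N(G'')-\N(G)$ and then adds the short observation that, because $L$ and $R$ are non-trivial (so $\N(L)_l,\N(R)_r\ge 2$), at least one of the two differences is strictly positive according to whether $\N(M-v)_u\ge\N(M-u)_v$ or the reverse. If the reference supplies only the identities and not the disjunction itself, that small case distinction should appear in your write-up.
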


\begin{proof}
It is shown in the proof of~\cite[Lemma~1]{AudaceGirth2018} that
	\begin{align*}
	\N(G') - \N(G)&= (\N(R)_r -1) (\N(L)_l \cdot \N(M-v)_u - \N(M-u)_v)\,,\\
	\N(G'') - \N(G)&= (\N(L)_l -1) (\N(R)_r \cdot \N(M-u)_v - \N(M-v)_u)\,,
	\end{align*}
Thus we deduce that
\begin{align*}
&\N(G') > \N(G) ~~ \text{if} ~~ \N(M-v)_u \geq  \N(M-u)_v\,,\\ 
&\N(G'') > \N(G) ~~ \text{if} ~~ \N(M-v)_u \leq  \N(M-u)_v\,.
\end{align*}
The setup presented in the lemma shows that both $u$ and $v$ are cut vertices of $G$, and that except possibly $u,v$, all other vertices of $G$ preserve their status (cut vertex/non cut vertex) in both $G'$ and $G''$. Clearly, $u$ (resp. $v$) remains a cut vertex of $G'$ (resp. $G''$). Now by the assumption that both $u$ and $v$ are cut vertices of $M$, we deduce that $u$ (resp. $v$) is a cut vertex of $G''$ (resp. $G'$). Hence $c(G'')=c(G')=c(G)$. 
\end{proof}

As a consequence of Lemma~\ref{singleBranch}, we obtain:
\begin{proposition}\label{Prop:onebranchvertex}
Every maximal graph contains at most one branching vertex.
\end{proposition}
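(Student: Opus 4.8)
The plan is to argue by contradiction using Lemma~\ref{singleBranch} as the key reduction tool. Suppose $G$ is a maximal graph containing at least two branching vertices. The goal is to exhibit a graph $G^{\ast}\in\mathcal{U}(n,c)$ with $\N(G^{\ast})>\N(G)$, contradicting maximality. First I would fix two distinct branching vertices of $G$ and set up the decomposition required to invoke Lemma~\ref{singleBranch}: I need to find an edge or vertex on a path joining the two branching vertices so that $G$ splits as $L$, $M$, $R$ glued at two cut vertices $u,v\in V(M)$, with $u$ and $v$ being the two branching vertices (or closely related to them). The pieces $L$ and $R$ should be chosen to be the non-trivial subtrees hanging off $u$ and $v$ respectively, and $M$ the remaining connected "middle" part containing the cycle or the path between the two branching vertices.

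The main work is verifying the hypotheses of Lemma~\ref{singleBranch} in this setting. I would check that $L,M,R$ are non-trivial and have pairwise disjoint vertex sets after the decomposition, that $u\neq v$, and—crucially—that both $u$ and $v$ are cut vertices of the middle graph $M$. This last point is what guarantees the cut-vertex count is preserved: the lemma's second conclusion gives $c(G')=c(G'')=c(G)$, so both candidate graphs $G'$ and $G''$ lie in $\mathcal{U}(n,c)$. Once inside the hypotheses, the first conclusion of the lemma yields $\N(G')>\N(G)$ or $\N(G'')>\N(G)$; either way we obtain a strictly larger count within the same class, contradicting the assumption that $G$ was maximal. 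The effect of passing from $G$ to $G'$ or $G''$ is to move an entire branch from one branching vertex onto the other, strictly reducing the number of branching vertices while keeping $n$ and $c$ fixed.

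I expect the principal obstacle to be the careful choice of which branching vertices to pick and how to carve out $L$, $M$, $R$ so that $u$ and $v$ genuinely remain cut vertices of $M$. In a unicyclic graph the two branching vertices may both lie on the cycle, or off it in the attached trees, or one of each; the decomposition must be described uniformly (or case-by-case) so that the middle piece $M$—which will contain the relevant connecting structure—still has $u$ and $v$ as cut vertices. If $u$ and $v$ are joined by a path through the trees and part of the cycle, one must ensure the branch being transplanted is a genuinely non-trivial subtree (so that $L$ or $R$ is non-trivial, i.e.\ $\N(L)_l>1$ or $\N(R)_r>1$, making the relevant factor positive). A branching vertex has out-degree at least $2$, so at least one pendant subtree at it is available to serve as $L$ or $R$, which secures non-triviality.

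Finally, I would conclude by iterating the argument if necessary: each application of the lemma strictly decreases the number of branching vertices, so after finitely many steps one reaches a graph with at most one branching vertex, and at every stage the count $\N(\cdot)$ strictly increased while $n$ and $c$ stayed fixed. Since $G$ was assumed maximal, no such strict increase is possible, forcing $G$ itself to have at most one branching vertex. The only delicate bookkeeping is confirming that the status (cut vertex versus non-cut vertex) of all vertices other than $u,v$ is preserved under the surgery, but this is exactly what the proof of Lemma~\ref{singleBranch} already records, so I would lean directly on that.
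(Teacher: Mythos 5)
Your plan is correct and follows essentially the same route as the paper: assume two branching vertices $u,v$, take $L$ and $R$ to be non-trivial pieces of the fringe subtrees $T[u]$ and $T[v]$ glued at $u$ and $v$, let $M$ be the remaining middle graph, verify (using a cycle vertex $w\notin\{u,v\}$) that $u$ and $v$ are cut vertices of $M$, and invoke Lemma~\ref{singleBranch} to contradict maximality. The case split you anticipate is exactly the one the paper makes (whether $V(T[u])\cap V(T[v])$ is empty or nested), and the iteration step at the end is unnecessary since the contradiction is already obtained at the first application.
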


\begin{proof}
Let $G$ be a maximal graph and suppose that $G$ contains two distinct branching vertices, say $u$ and $v$. Let $u'$ be a neighbour of $u$ in $T[u]$ and $v'$ a neighbour of $v$ in $T[v]$. Denote by $L$ (resp. $R$) the component of $T[u]-uu'$ that contains $u$ (resp. the component of $T[v]-vv'$ that contains $v$). Then both $L$ and $R$ are non-trivial graphs. Set $M=G-(V(L-u) \cup V(R-v))$. The specific choice of $u'$ depends on the following two cases:

Case 1: $V(T[u]) \cap V(T[v])\neq \emptyset$. 

We must have $V(T[u]) \subset V(T[v])$ or $V(T[v]) \subset V(T[u])$. We can assume, without loss of generality, that $V(T[v]) \subset V(T[u])$. Choose $u'$ to be that vertex lying on the unique $u-v$ path in $G$ (possibly $u'=v$). Then we have $V(L) \cap V(R)=\emptyset$.

Case 2: $V(T[u]) \cap V(T[v])=\emptyset$.

We have $V(L) \cap V(R)=\emptyset$.

\medskip
Let $w\notin \{u,v\}$ be a vertex of $G$ that lies on its cycle. Note that for either case, we have $w,u',v'\in V(M)$. Moreover, all $u'-w$ paths in $M$ must pass through $u$, and all $v'-w$ paths in $M$ must pass through $v$. Therefore, both $u$ and $v$ are cut vertices of $M$. In particular, Lemma~\ref{singleBranch} applied to $G$ yields a new graph with order $|V(G)|$ and $c(G)$ cut vertices that has more connected induced subgraphs than $G$, a contradiction to the choice of $G$. Hence $G$ can only contains at most one branching vertex.
\end{proof}

\subsection{Girth is at most $4$}\label{Subsec:girth3or4}
By refining an approach employed in~\cite{Audacegenral2018}, we shall prove that the girth of every maximal graph cannot exceed $4$.

\begin{lemma}\label{Girth3or4}
Consider $g>3$ connected graphs $G_0,G_1,\ldots,G_{g-1}$ whose vertex sets are pairwise disjoint. For every $j \in \{0,1,\ldots,g-1\}$, let $v_j$ be a fixed vertex of $G_j$. Assume that $$\N(G_{g-2})_{v_{g-2}}=\max_{0\leq j \leq g-1} \N(G_j)_{v_j}\,,$$ and let the graphs $G$ and $G'$ be constructed as follows:
\begin{itemize}
	\item Add the edges $v_0v_1, v_1v_2,\ldots, v_{g-1}v_0$ to obtain the graph $G$;
	\item Add the edges $v_0v_1, v_1v_2,\ldots, v_{g-2}v_0$ and $v_{g-2}v_{g-1}$ to obtain the graph $G'$.	
\end{itemize}
The following hold:
\begin{enumerate}[i)]
	\item If $G$ is not a cycle, then $c(G')=c(G)$.
	\item For $g>4$ we have $\N(G')>\N(G)$.
	\item For $g=4$ we have $\N(G')\geq \N(G)$ if and only if $\N(G_2)_{v_2}\geq  \N(G_3)_{v_3}(1+\N(G_1)_{v_1})$. Moreover, $\N(G')=\N(G)$ if and only if $\N(G_2)_{v_2}= \N(G_3)_{v_3}(1+\N(G_1)_{v_1})$.
\end{enumerate}
\end{lemma}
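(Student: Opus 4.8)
The plan is to reduce both counts to a single shared spanning tree. Deleting the edge $v_{g-1}v_0$ from $G$ leaves the tree $T$ consisting of the backbone path $v_0v_1\cdots v_{g-1}$ with $G_j$ rooted at $v_j$, and one checks directly that $G=T+v_{g-1}v_0$ while $G'=T+v_{g-2}v_0$; thus $G$ and $G'$ arise from the \emph{same} tree $T$ by adding a single edge incident to $v_0$. Writing $\N(G)=\N(T)+X(G)$ and $\N(G')=\N(T)+X(G')$, where $X(\cdot)$ counts the connected induced subgraphs that genuinely use the added edge, we obtain $\N(G')-\N(G)=X(G')-X(G)$. Now $W$ contributes to $X(G)$ exactly when $v_0,v_{g-1}\in W$ and $T[W]$ splits into two components separating $v_0$ from $v_{g-1}$; since these are the backbone endpoints, such $W$ is a left run $v_0,\dots,v_s$ and a right run $v_t,\dots,v_{g-1}$ with $t\ge s+2$, together with a connected subgraph of $G_j$ through each retained $v_j$. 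Writing $a_j:=\N(G_j)_{v_j}$ and $A_s:=\prod_{j=0}^{s}a_j$, this yields a closed form for $X(G)$; the count $X(G')$ is analogous, separating $v_0$ from $v_{g-2}$, except that the component of $v_{g-2}$ may now also absorb the branch at $v_{g-1}$, contributing a factor $1+a_{g-1}$. After simplification I expect the difference to collapse to
\[
\N(G')-\N(G)=\sum_{s=0}^{g-3}A_s\big(\widehat\Sigma_s-a_{g-1}\big),\qquad \widehat\Sigma_s:=\sum_{t=s+2}^{g-2}\ \prod_{j=t}^{g-2}a_j .
\]

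For part (iii) I would specialise to $g=4$: the sum has only the terms $s=0,1$ and reduces to $\N(G')-\N(G)=a_0\big(a_2-a_3(1+a_1)\big)$. Since $a_0=\N(G_0)_{v_0}\ge 1>0$, the sign is governed entirely by $a_2-a_3(1+a_1)$, which delivers both assertions simultaneously: $\N(G')\ge\N(G)$ iff $\N(G_2)_{v_2}\ge\N(G_3)_{v_3}\big(1+\N(G_1)_{v_1}\big)$, with equality iff equality holds in the latter.

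For part (i) I would track cut vertices one at a time. Passing from $G$ to $G'$ only relocates the branch $G_{g-1}$ from its own cycle vertex $v_{g-1}$ to a pendant position at $v_{g-2}$, so every vertex other than possibly $v_{g-2}$ keeps its local type and hence its cut/non-cut status; in particular $v_{g-1}$ is a cut vertex of $G'$ iff $G_{g-1}$ is nontrivial, exactly as in $G$. The vertex $v_{g-2}$ is always a cut vertex of $G'$ (it now carries the nontrivial pendant branch reaching $v_{g-1}$), whereas in $G$ it is a cut vertex iff $G_{g-2}$ is nontrivial. This is where the hypothesis enters: as $a_{g-2}=\max_j a_j$ and $G$ is not a cycle, some $G_j$ is nontrivial, so $\max_j a_j\ge 2$, forcing $a_{g-2}\ge 2$ and hence $G_{g-2}$ nontrivial. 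Thus $v_{g-2}$ is a cut vertex in both graphs and $c(G')=c(G)$; were $G$ a cycle this step would fail, which is precisely why that case is excluded.

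Part (ii) is where the real difficulty lies. By the displayed formula it amounts to proving that $\sum_{s=0}^{g-3}A_s(\widehat\Sigma_s-a_{g-1})$ is strictly positive for $g>4$. The natural handle is the maximality of $a_{g-2}$: for each $s\le g-4$ the inner sum $\widehat\Sigma_s$ contains the $t=g-2$ term, namely $a_{g-2}\ge a_{g-1}$, so all of these terms are nonnegative, and the only negative contribution is the tail $s=g-3$, equal to $-A_{g-3}a_{g-1}$. The hard part will therefore be to show that the nonnegative head outweighs this single negative tail term for all admissible weights; I would attempt this by pairing the tail against the low-$s$ terms, where $\widehat\Sigma_s$ is a long sum of products and is large, again using $a_{g-2}=\max_j a_j$ to bound each $\prod_{j=t}^{g-2}a_j$ from below. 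Controlling this domination uniformly in $g$ and in the weights is the main obstacle, and I expect it to require the full force of the maximality hypothesis rather than merely $a_{g-2}\ge a_{g-1}$.
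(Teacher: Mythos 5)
Your reduction to the common spanning tree $T$ is sound and is a genuinely different organisation from the paper's: the paper classifies connected induced subgraphs by their intersection with $V(G_{g-2})\cup V(G_{g-1})$ and compares $G$ with $G'$ through auxiliary counts $M_1,M_2,M_3,M_4$, whereas you compare the two chords $v_{g-1}v_0$ and $v_{g-2}v_0$ added to the same tree. Your closed form
\[
\N(G')-\N(G)=\sum_{s=0}^{g-3}A_s\bigl(\widehat\Sigma_s-a_{g-1}\bigr)
\]
is correct (for $g=4$ it collapses to $a_0\bigl(a_2-a_3(1+a_1)\bigr)$, exactly the paper's expression), and your treatments of parts i) and iii) are complete and agree in substance with the paper's.

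Part ii), however, is a genuine gap, and it cannot be closed: the sum you need to be positive can be negative under the stated hypotheses. Take $g=5$ and $(a_0,\dots,a_4)=(1,2,1,2,2)$; concretely, let $G_0,G_2$ be trivial and $G_1,G_3,G_4$ each a single edge, so that $G$ is $C_5$ with pendant edges at $v_1,v_3,v_4$ and the hypothesis $a_3=\max_j a_j$ holds. Your formula evaluates to $1\cdot(4-2)+2\cdot(2-2)+2\cdot(0-2)=-2$, and a direct count confirms $\N(G)=77$ while $\N(G')=75$. So no pairing of the head terms against the tail $-A_{g-3}a_{g-1}$ can succeed; statement ii) is false as written. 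The defect lies in the paper rather than only in your attempt: the paper's displayed formula for $M_3$ counts each subgraph containing all of $v_0,\dots,v_{g-1}$ once for every split $(l,\,r=g-1-l)$, i.e.\ $g-3$ times instead of once, and with the true value of $M_3$ the paper's key inequality $M_3>1+M_1+M_2$ fails in the example above ($M_3=10$ but $1+M_1+M_2=11$). Your instinct to flag the domination step as the main obstacle was exactly right --- the obstacle is insurmountable, and the lemma (hence its use in bounding the girth of maximal graphs) requires an additional hypothesis or a different transformation, e.g.\ a more careful choice of which neighbour of a maximising vertex is detached.
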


\begin{proof}
We first introduce the following notation: $M_1$ is the number of connected induced subgraphs of $G-(V(G_{g-1})\cup V(G_{g-2} - v_{g-2}))$ that contain $v_{g-2}$ and at least one other vertex; $M_2$ is the number of connected induced subgraphs of $G-(V(G_{g-2})\cup V(G_{g-1} - v_{g-1}))$ that contain  $v_{g-1}$ and at least one other vertex; $M_3$ is the number of connected induced subgraphs of $G-(V(G_{g-2} -v_{g-2}) \cup V(G_{g-1} - v_{g-1})) $ that contain both $v_{g-2}$ and $v_{g-1}$; $M_4$ is the number of connected induced subgraphs of $G'-(V(G_{g-1}) \cup V(G_{g-2} -v_{g-2}))$ that contain $v_{g-2}$ and at least one other vertex. Thus it is not difficult to see that
\begin{align*}
M_1=\sum_{r=3}^g \prod_{j=3}^r \N(G_{g-j})_{v_{g-j}}\,, \quad M_2=\sum_{l=0}^{g-3} \prod_{j=0}^l \N(G_j)_{v_j}\,,
\end{align*}
and
\begin{align*}
M_3=\Big(1+ \sum_{r=3}^{g-1} \prod_{j=3}^r \N(G_{g-j})_{v_{g-j}} \Big) + \sum_{l=0}^{g-4} \prod_{j=0}^l \N(G_j)_{v_j} + \sum_{l=0}^{g-4} \sum_{r=3}^{g-1-l} \prod_{j=0}^l \N(G_j)_{v_j} \cdot \prod_{j=3}^r \N(G_{g-j})_{v_{g-j}}\,.
\end{align*}
In the expression of $M_3$, the first term into brackets contributes to those subgraphs that do not involve $v_0$; the second sum contributes to those subgraphs that involve $v_0$ but not $v_{g-3}$; the last sum is the number of those subgraphs that contain both $v_0$ and $v_{g-3}$. In particular, for $g>4$ we get 
\begin{align*}
M_3&=1+M_1+M_2 + \N(G_0)_{v_0} \sum_{r=3}^{g-2} \prod_{j=3}^r \N(G_{g-j})_{v_{g-j}} + \sum_{l=1}^{g-5} \sum_{r=3}^{g-1-l} \prod_{j=0}^l \N(G_j)_{v_j} \cdot \prod_{j=3}^r \N(G_{g-j})_{v_{g-j}}\\
& > 1+M_1+M_2\,.
\end{align*}
With the above notation, we can infer that the number of connected induced subgraphs of $G$ that contain
\begin{enumerate}[(i)]
	\item a vertex of $G_{g-2}$ and no vertex of $G_{g-1}$ is
	\begin{align*}
	\N(G_{g-2})+ \N(G_{g-2})_{v_{g-2}} M_1\,,
	\end{align*}
	\item a vertex of $G_{g-1}$ and no vertex of $G_{g-2}$ is
	\begin{align*}
	\N(G_{g-1})+ \N(G_{g-1})_{v_{g-1}} M_2\,,
	\end{align*}
	\item a vertex of $G_{g-2}$ and a vertex of $G_{g-1}$ is
	\begin{align*}
	\N(G_{g-2})_{v_{g-2}} \N(G_{g-1})_{v_{g-1}} M_3\,.
	\end{align*}
\end{enumerate}
Let us denote by $\N(G;V(G_{g-2})\cup V(G_{g-1}))$ the number of connected induced subgraphs of $G$ that contain an element of the set $V(G_{g-2})\cup V(G_{g-1})$. Thus we have
\begin{align}\label{NumberG}
\begin{split}
\N(G;V(G_{g-2})\cup V(G_{g-1}))=&\N(G_{g-2})+ \N(G_{g-2})_{v_{g-2}} M_1 + 	\N(G_{g-1})+ \N(G_{g-1})_{v_{g-1}} M_2 \\
&+ \N(G_{g-2})_{v_{g-2}} \N(G_{g-1})_{v_{g-1}} M_3\,.
\end{split}
\end{align}
Denote by $G'_{g-2}$ the subgraph induced by $V(G_{g-2}) \cup V(G_{g-1})$ in $G'$. Thus we have
\begin{align*}
\N(G'_{g-2})_{v_{g-2}}&=\N(G_{g-2})_{v_{g-2}}(1+\N(G_{g-1})_{v_{g-1}})\,,\\
\N(G'_{g-2}- v_{g-2})&=\N(G_{g-2}- v_{g-2}) +\N(G_{g-1})\,,
\end{align*}
and the number $\N(G';V(G_{g-2})\cup V(G_{g-1}))$ of connected induced subgraphs of $G'$ that contain an element of $V(G_{g-2}) \cup V(G_{g-1})$ is given by
\begin{align}\label{NumberGprime}
\begin{split}
\N(G';V(G_{g-2})\cup V(G_{g-1}))=&\N(G'_{g-2}) + \N(G'_{g-2})_{v_{g-2}} M_4\\
=&\N(G_{g-2})_{v_{g-2}}(1+\N(G_{g-1})_{v_{g-1}})(1+M_4)\\
& + \N(G_{g-2}- v_{g-2}) +\N(G_{g-1})\,.
\end{split}
\end{align}
By construction $G-(V(G_{g-2})\cup V(G_{g-1}))$ and $G'-(V(G_{g-2})\cup V(G_{g-1}))$ are isomorphic graphs, and it is clear that $M_3=1+M_4$. Hence the difference~\eqref{NumberGprime} -~\eqref{NumberG} gives
\begin{align*}
\N(G';V(G_{g-2})&\cup V(G_{g-1})) - \N(G;V(G_{g-2})\cup V(G_{g-1})) \\
&=\N(G_{g-2})_{v_{g-2}} (M_3 -1) - \N(G_{g-2})_{v_{g-2}} M_1 - \N(G_{g-1})_{v_{g-1}} M_2\\
&> M_2 \big(\N(G_{g-2})_{v_{g-2}} - \N(G_{g-1})_{v_{g-1}}\big)\,,
\end{align*}
where the last step uses the inequality $M_3> 1+ M_1+M_2$ for $g>4$. It follows from the assumption $\N(G_{g-2})_{v_{g-2}} \geq  \N(G_{g-1})_{v_{g-1}}$ that 
\begin{align*}
\N(G')-\N(G) = \N(G';V(G_{g-2})\cup V(G_{g-1})) - \N(G;V(G_{g-2})\cup V(G_{g-1}))>0
\end{align*}
for $g>4$. This completes the proof of ii). For $g=4$, the expressions of $M_1,M_2,M_3$ become
\begin{align*}
M_1&=\N(G_1)_{v_1}(1+\N(G_0)_{v_0}), ~M_2=\N(G_0)_{v_0}(1+\N(G_1)_{v_1})\,,\\ M_3&=(1+\N(G_0)_{v_0})(1+\N(G_1)_{v_1})\,,
\end{align*}
and they imply that
\begin{align*}
\N(G')-\N(G)&=\N(G';V(G_2)\cup V(G_3)) - \N(G;V(G_2)\cup V(G_3))\\
&=\N(G_0)_{v_0}(\N(G_2)_{v_2}- \N(G_3)_{v_3}(1+\N(G_1)_{v_1}))\,.
\end{align*}
This proves iii).

It remains to prove i). It is clear that all cut vertices (resp. non cut vertices), except possibly vertices $v_{g-2}$ and $v_{g-1}$ of $G$ remain cut vertices (resp. non cut vertices) of $G'$. On the other hand, $v_{g-1}$ is not a cut vertex of $G$ if and only if $|V(G_{g-1})|=1$, in which case $v_{g-1}$ becomes a pendant vertex of $G'$. Since $G$ is not a cycle, we have $|V(G_{g-2})|>1$ which implies that $v_{g-2}$ is a cut vertex of $G$ and thus a cut vertex of $G'$. This proves that $c(G')=c(G)$, completing the proof of the lemma.
\end{proof}

From now on, we assume that $G$ is described as in Lemma~\ref{Girth3or4}. The following proposition is an immediate consequence of Lemma~\ref{Girth3or4}:

\begin{proposition}\label{Prop:Girth3or4}
Every maximal graph has girth $3$ or $4$. If a maximal graph $G$ has girth $4$, then it holds that $$\N(G_2)_{v_2}\leq  \N(G_3)_{v_3}(1+\N(G_1)_{v_1}).$$
\end{proposition}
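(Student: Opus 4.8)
The plan is to read off both assertions directly from Lemma~\ref{Girth3or4}, the point being that the modified graph $G'$ never leaves $\mathcal{U}(n,c)$, so the maximality of $G$ forces $\N(G')\le\N(G)$ and the sign information supplied by the lemma does the rest. Throughout I would use one preliminary fact: a maximal graph is never a cycle. Indeed $C_n$ has no cut vertices, whereas by hypothesis $0<c<n-2$, so a maximal graph has at least one cut vertex. This is precisely the condition under which part~i) of Lemma~\ref{Girth3or4} applies, giving $c(G')=c(G)=c$. Since $G'$ is built on the same vertex set as $G$ (both are formed by adding $g$ edges to the disjoint union of the $G_j$) and its unique cycle has length $g-1\ge 3$ for any $g>3$, the graph $G'$ is again unicyclic of order $n$ with $c$ cut vertices, i.e. $G'\in\mathcal{U}(n,c)$.

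For the first statement I would argue by contradiction. Suppose some maximal $G$ has girth $g>4$. The labelling of the cycle $v_0,\ldots,v_{g-1}$ is only fixed up to cyclic rotation and reflection, so I may relabel so that $\N(G_{g-2})_{v_{g-2}}=\max_{0\le j\le g-1}\N(G_j)_{v_j}$; this is exactly the standing assumption that $G$ is described as in Lemma~\ref{Girth3or4}. Part~ii) then yields $\N(G')>\N(G)$, while the preceding paragraph shows $G'\in\mathcal{U}(n,c)$. This contradicts the maximality of $G$, so every maximal graph has girth $3$ or $4$.

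For the second statement, let $G$ be maximal of girth $4$, labelled so that $\N(G_2)_{v_2}=\max_{0\le j\le 3}\N(G_j)_{v_j}$ (here $g-2=2$). Again $G'\in\mathcal{U}(n,c)$, so maximality yields $\N(G')\le\N(G)$. Part~iii) of the lemma says $\N(G')\ge\N(G)$ if and only if $\N(G_2)_{v_2}\ge\N(G_3)_{v_3}(1+\N(G_1)_{v_1})$, with equality matching equality. Hence if the strict inequality $\N(G_2)_{v_2}>\N(G_3)_{v_3}(1+\N(G_1)_{v_1})$ held, then the ``$\ge$'' part would give $\N(G')\ge\N(G)$ while the equality part would rule out $\N(G')=\N(G)$, forcing $\N(G')>\N(G)$ and contradicting maximality. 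Therefore $\N(G_2)_{v_2}\le\N(G_3)_{v_3}(1+\N(G_1)_{v_1})$, as claimed.

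I do not anticipate a serious obstacle, since the analytic heart has been isolated inside Lemma~\ref{Girth3or4} and what remains is essentially bookkeeping. The two points that warrant care are the legitimacy of the cyclic relabelling (harmless, because the lemma's hypothesis is a pure maximum over the $G_j$ and the cycle admits a symmetric labelling) and the membership $G'\in\mathcal{U}(n,c)$, which rests on part~i) for the cut-vertex count and on $g-1\ge 3$ for unicyclicity.
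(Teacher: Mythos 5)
Your proposal is correct and follows the paper's route exactly: the paper simply declares the proposition ``an immediate consequence of Lemma~\ref{Girth3or4}'', and your write-up supplies precisely the bookkeeping that justifies that claim (relabelling to put the maximum at $v_{g-2}$, noting $G$ is not a cycle since $c>0$ so part~i) applies, and reading off the contradiction from parts~ii) and~iii)). No gaps.
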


\medskip
The final lemma of this subsection will be needed in subsequent subsections. 

\begin{lemma}\label{Formula3or4}
	Let $G$ be the graph described in Lemma~\ref{Girth3or4}. The following hold:
	
	For $g=3$, we have
	\begin{align*}
	\N(G)=&\N(G_0-v_0) + \N(G_1-v_1) +\N(G_2-v_2)  \\
	& + (1+\N(G_0)_{v_0}) (1+\N(G_1)_{v_1}) (1+\N(G_2)_{v_2}) -1\,.
	\end{align*}
	
	For $g=4$, we have
	\begin{align*}
	\N(G)=&\N(G_0-v_0) + \N(G_1-v_1) +\N(G_2-v_2) +\N(G_3-v_3) \\
	& + (1+\N(G_0)_{v_0}) (1+\N(G_1)_{v_1}) (1+\N(G_2)_{v_2}) (1+\N(G_3)_{v_3})\\ 
	& - (1+\N(G_0)_{v_0}\cdot \N(G_2)_{v_2} + \N(G_1)_{v_1}\cdot \N(G_3)_{v_3})\,.
	\end{align*}
\end{lemma}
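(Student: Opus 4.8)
The plan is to count the connected induced subgraphs of $G$ by partitioning them according to how they meet the cycle. Write $C$ for the cycle $v_0v_1\cdots v_{g-1}$ and abbreviate $a_j:=\N(G_j)_{v_j}$. For a connected induced subgraph $H$ of $G$, let $S=V(H)\cap\{v_0,\ldots,v_{g-1}\}$ be the set of cycle vertices it contains. The first thing I would record is structural: since each $G_j$ is attached to the rest of $G$ only through $v_j$, the graph $G-\{v_0,\ldots,v_{g-1}\}$ is the disjoint union of the pieces $G_j-v_j$, and, more importantly, if $v_j\notin V(H)$ then $H$ meets $G_j$ in no vertex at all.

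First I would dispose of the subgraphs with $S=\emptyset$. Such an $H$ avoids every cycle vertex, hence lies inside $G-\{v_0,\ldots,v_{g-1}\}=\bigsqcup_j (G_j-v_j)$; being connected, it is contained in a single $G_j-v_j$, and conversely every connected induced subgraph of any $G_j-v_j$ arises in this way. This contributes exactly $\sum_j \N(G_j-v_j)$, accounting for the leading sum in both formulas.

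Next, for $S\neq\emptyset$ I would argue that $H$ is determined by two essentially independent pieces of data: for each $j$ with $v_j\in S$, the trace $H\cap G_j$ is a connected induced subgraph of $G_j$ containing $v_j$, so there are $a_j$ choices and these may be made independently across $j$; and $S$ itself must induce a connected subgraph of $C$. The key point, which I expect to be the only real content of the argument, is the equivalence ``$H$ connected $\iff$ $C[S]$ connected''. One direction is immediate: if $C[S]$ is connected then consecutive vertices of $S$ are joined by cycle edges present in $H$, and each blob trace is connected through its root $v_j$, so $H$ is connected. For the converse, note that distinct blobs communicate in $G$ only along the cycle, so any $v_i$--$v_k$ path inside $H$ runs through cycle vertices, all of which lie in $S$, forcing $C[S]$ to be connected. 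This reduces the $S\neq\emptyset$ count to $\sum_{\emptyset\neq S,\ C[S]\text{ connected}}\prod_{j\in S}a_j$.

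It then remains to evaluate this sum for $g=3$ and $g=4$ and to recognise the product forms. For $g=3$ the cycle is a triangle, so $C[S]$ is connected for every nonempty $S$; hence the sum is $(1+a_0)(1+a_1)(1+a_2)-1$, and adding the $S=\emptyset$ contribution gives the claimed $g=3$ formula. For $g=4$ the only nonempty subsets of $C_4$ that fail to induce a connected subgraph are the two diagonals $\{v_0,v_2\}$ and $\{v_1,v_3\}$; removing these and the empty set from the full expansion $\prod_j(1+a_j)=\sum_S\prod_{j\in S}a_j$ yields $\prod_j(1+a_j)-(1+a_0a_2+a_1a_3)$, and again adding $\sum_j\N(G_j-v_j)$ completes the $g=4$ formula. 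The whole argument is elementary; the one step deserving genuine care is the connectivity equivalence above, together with the routine verification that the blob traces are truly independent once $S$ is fixed.
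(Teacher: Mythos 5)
Your proof is correct and follows exactly the route the paper intends: the paper's own proof is the single sentence ``categorise the connected induced subgraphs of $G$ according to whether they contain a vertex of $C$ or not,'' and your argument is precisely that classification carried out in full, refined by the set $S$ of cycle vertices met and closed out by the (correct) observation that the only nonempty vertex subsets of $C_3$, resp.\ $C_4$, inducing disconnected subgraphs are none, resp.\ the two diagonals. One cosmetic point: the blanket claim ``if $v_j\notin V(H)$ then $H$ meets $G_j$ in no vertex at all'' is literally false for $H$ contained in $G_j-v_j$; it holds, and is only needed, once $S\neq\emptyset$, which is how you in fact use it.
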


\begin{proof}
Let $C$ be the cycle of $G$. Simply categorise the connected induced subgraphs of $G$ according to whether they contain a vertex of $C$ or not.
\end{proof}

\subsection{The branching vertex lies on the cycle}\label{Subsec:branchvertexcycle}
Our next goal is to prove that if there is a branching vertex in a maximal graph $G$, then it must belong to the cycle of $G$.

We begin with other graph transformations that preserve the number of cut vertices of $G$ while affecting the number of connected induced subgraphs. The main result of this subsection will follow after a series of lemmas.

\begin{lemma}\label{G1G2merged}
	If $G$ is obtained from two vertex disjoint connected graphs $G_1,G_2$ by fixing $u_1 \in V(G_1),u_2\in V(G_2)$ and identifying $u_1$ with $u_2$, then it holds that
	\begin{align*}
	\N(G)=\N(G_1)+\N(G_2) -1 + (\N(G_1)_{u_1} -1) (\N(G_2)_{u_2} -1)\,.
	\end{align*}
\end{lemma}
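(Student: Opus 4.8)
The plan is to partition the connected induced subgraphs of $G$ according to whether or not they contain the identification vertex, which I denote $u$ (so that $u=u_1=u_2$ when viewed inside $G$), and to count the two classes separately. The structural fact driving everything is that $u$ separates the two sides: since $V(G_1)\cap V(G_2)=\{u\}$ in $G$, every path from a vertex of $V(G_1)\setminus\{u\}$ to a vertex of $V(G_2)\setminus\{u\}$ must pass through $u$.

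First I would count the connected induced subgraphs that avoid $u$. By the separation property, such a subgraph cannot contain vertices on both sides, so it lies entirely in $G_1-u$ or entirely in $G_2-u$. The connected induced subgraphs of $G_i$ that avoid $u_i$ number exactly $\N(G_i)-\N(G_i)_{u_i}$, so this class contributes $(\N(G_1)-\N(G_1)_{u_1})+(\N(G_2)-\N(G_2)_{u_2})$.

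Next I would count the subgraphs containing $u$ by setting up a bijection. Given a connected induced subgraph $S$ with $u\in S$, write $A=S\cap V(G_1)$ and $B=S\cap V(G_2)$, so that $A\cap B=\{u\}$ and $S=A\cup B$. The separation property forces $A$ to be connected in $G_1$ and $B$ to be connected in $G_2$: a path in $S$ from any vertex of $A$ to $u$ cannot cross to the $G_2$-side before reaching $u$, hence its initial segment stays within $A$, and symmetrically for $B$. Conversely, any pair $(A,B)$ consisting of a connected induced subgraph $A$ of $G_1$ with $u_1\in A$ and a connected induced subgraph $B$ of $G_2$ with $u_2\in B$ yields, via $A\cup B$, a connected induced subgraph of $G$ through $u$. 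This correspondence is clearly a bijection, so the number of subgraphs through $u$ is $\N(G_1)_{u_1}\cdot\N(G_2)_{u_2}$.

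Summing the two contributions gives $\N(G)=\N(G_1)-\N(G_1)_{u_1}+\N(G_2)-\N(G_2)_{u_2}+\N(G_1)_{u_1}\N(G_2)_{u_2}$, and factoring the last three terms rearranges this into the claimed form $\N(G_1)+\N(G_2)-1+(\N(G_1)_{u_1}-1)(\N(G_2)_{u_2}-1)$. The only genuine subtlety is the well-definedness of the bijection, namely that the restrictions $A$ and $B$ inherit connectivity; this is exactly where the separation argument is needed, and once it is in place the remainder is routine counting together with a one-line algebraic identity. I would also note that no non-triviality hypothesis on $G_1,G_2$ is required: if one of them is a single vertex the same count degenerates correctly, so the formula holds in that edge case automatically.
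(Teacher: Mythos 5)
Your proof is correct and follows essentially the same idea as the paper, which disposes of this lemma in one line by noting that $\N(G_1)+\N(G_2)-1$ counts the connected induced subgraphs contained entirely in $G_1$ or in $G_2$, the remaining product term counting those that straddle both sides through $u$. Your partition (by whether the subgraph contains $u$) differs only cosmetically from the paper's and the separation argument you spell out is exactly the routine justification the paper leaves implicit.
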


\medskip
The proof of Lemma~\ref{G1G2merged} is straightforward by noting that $\N(G_1)+\N(G_2) -1$ counts those connected induced subgraphs of $G$ that are contained entirely in $G_1$ or $G_2$. Lemma~\ref{G1G2merged} is used in the next two lemmas without further reference.

\begin{lemma}\label{AB}
Let $u \in V(A), v\in V(B)$ be fixed vertices of two vertex disjoint connected graphs $A$ and $B$ such that $|V(A)|>1$. Attach a pendant path of order $n_1$ at $u$, and connect $u$ and $v$ by another path of order $n_2$ for some $n_1,n_2>1$ to obtain the graph $G$. If $G'$ (resp. $G''$) is obtained the same way by replacing $n_1$ with $n_1-1$ and $n_2$ with $n_2+1$ (resp. $n_1$ with $n_1+1$ and $n_2$ with $n_2-1$), then we have $$\N(G')>\N(G)$$ if and only if $\N(B)_v<n_1-n_2$, and $$\N(G'')>\N(G)$$ if and only if $\N(B)_v>n_1-n_2+2$. Furthermore, we have 
	\begin{align*}
	c(G')=c(G) ~~ \text{if} ~~ n_1> 2\,, ~~ \text{and} ~~ c(G'')=c(G) ~~ \text{if} ~~ n_2>2 ~\text{or} ~ |V(B)|>1\,.
	\end{align*}
\end{lemma}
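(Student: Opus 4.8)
The plan is to compute $\N(G')-\N(G)$ and $\N(G'')-\N(G)$ explicitly using the decomposition in Lemma~\ref{G1G2merged}, and to track how the cut-vertex count changes under the two transformations. The graph $G$ is built from three pieces glued at $u$: the graph $A$, a pendant path of order $n_1$, and a path of order $n_2$ whose far endvertex is identified with $v\in V(B)$ (so $B$ hangs off the end of the second path). First I would set up clean notation for the relevant quantities. Let $a=\N(A)_u$, let $b=\N(B)_v$, and recall from the preliminaries that a pendant path of order $n_1$ rooted at $u$ contributes the factor $\N(P_{n_1})_w=n_1$ to the count of connected induced subgraphs containing $u$. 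The essential observation is that $G,G',G''$ all have the same number of vertices and differ only in how the $n_1+n_2$ vertices on the two paths are distributed between the pendant path and the $u$--$v$ path.

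Next I would write $\N(G)$ in a product form. Viewing $u$ as the common gluing vertex, the count of connected induced subgraphs of $G$ containing $u$ factors as a product over the pieces attached at $u$, since a connected subgraph through a cut vertex is determined independently on each branch; the branches here are $A$ (factor $a$), the pendant path (factor $n_1$), and the $u$--$v$--$B$ branch. Along that last branch the subgraphs containing $u$ are indexed by how far along the $u$--$v$ path they reach and, once they reach $v$, which connected subgraph of $B$ through $v$ they pick up, giving a factor of the form $(n_2-1)+b$ (the $n_2-1$ interior choices plus the $b$ choices once $v$ is included, with appropriate bookkeeping of the endpoint). Writing $\N(G)=(\text{terms not through }u)+a\cdot n_1\cdot\big((n_2-1)+b\big)$ and doing the same for $G'$ (with $n_1-1,n_2+1$) and $G''$ (with $n_1+1,n_2-1$), the terms not through $u$ and the factor $a$ are common to all three, so the differences collapse to a short algebraic expression. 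The key cancellation I expect is
\begin{align*}
\N(G')-\N(G)&=a\big(n_2-n_1+1+b\big)\quad\text{(up to the correct constant)},\\
\N(G'')-\N(G)&=a\big(n_1-n_2-1+b\big),
\end{align*}
after which $\N(G')>\N(G)\iff b<n_1-n_2$ and $\N(G'')>\N(G)\iff b>n_1-n_2+2$ follow by rearranging (since $a>0$ because $|V(A)|>1$ forces $\N(A)_u\geq 1$, and in fact the common factor is strictly positive). The main obstacle here is pinning down the exact integer constants in the branch factors so that the two stated thresholds $n_1-n_2$ and $n_1-n_2+2$ come out correctly rather than off by one; this requires careful attention to whether the free endvertex of a pendant path is counted, and to the boundary subgraphs that sit exactly at $u$ or exactly at $v$.

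Finally I would verify the cut-vertex claims, which are essentially structural. Moving one vertex from the pendant path to the $u$--$v$ path (to form $G'$) lengthens the internal $u$--$v$ path and shortens the pendant path; the internal vertices of both paths have degree $2$, and $u$ remains a cut vertex throughout because $|V(A)|>1$. The only vertex whose status can change is the free endvertex of the pendant path: in $G$ it is a leaf (non-cut), and shortening the path to order $n_1-1$ keeps a nontrivial pendant path precisely when $n_1>2$, so $c(G')=c(G)$ under $n_1>2$. For $G''$, shortening the $u$--$v$ path to order $n_2-1$ keeps $v$ a cut vertex of $G''$ as long as $B$ is still attached through a genuine bridge or $v$ retains separating power, which holds when $n_2>2$ (the path itself still supplies an internal degree-$2$ cut vertex) or when $|V(B)|>1$ (so $v$ separates $B$ from the rest); hence $c(G'')=c(G)$ under that disjunction. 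I would present this by listing the finitely many vertices whose degree or separating role is altered and checking each, which is routine once the picture is fixed.
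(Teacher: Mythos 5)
Your overall strategy is the same as the paper's: decompose the count at the cut vertex $u$, use the product structure over the three branches (with branch factors $\N(A)_u$, $n_1$, and $(n_2-1)+\N(B)_v$), take differences, and argue structurally about cut vertices. The branch factors you identify are correct, and the cut-vertex discussion, while informal, captures the right mechanism (the paper phrases it as contracting a vertex of one path and subdividing an edge of the other, which makes the preservation of cut-vertex count immediate). However, the computation as written contains a genuine error, not merely an undetermined constant. The correct differences are
\begin{align*}
\N(G')-\N(G)&=(\N(A)_u-1)\,(n_1-n_2-\N(B)_v)\,,\\
\N(G'')-\N(G)&=-(\N(A)_u-1)\,(n_1-n_2+2-\N(B)_v)\,,
\end{align*}
whereas your displayed formula $\N(G')-\N(G)=a(n_2-n_1+1+b)$ is positive precisely when $b\geq n_1-n_2$, i.e.\ it asserts the exact negation of the equivalence $\N(G')>\N(G)\iff \N(B)_v<n_1-n_2$ that you then claim to deduce from it; the same sign reversal on $n_1-n_2$ occurs in your formula for $\N(G'')-\N(G)$. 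This is a sign error in the through-$u$ contribution (which equals $\N(A)_u\cdot\big[(n_1-1)(n_2+b)-n_1(n_2-1+b)\big]=\N(A)_u(n_1-n_2-b)$ for $G'$), and no choice of additive constant repairs it.

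A second flaw is the assertion that ``the terms not through $u$ \ldots are common to all three'' graphs. They are not: the connected induced subgraphs avoiding $u$ that lie inside the pendant path and inside the $u$--$v$ path contribute $\binom{n_1}{2}+\binom{n_2-1}{2}$ to $\N(G)$, and these quantities change with $n_1,n_2$; so does the count inside the component of $G-u$ containing $B$. Tracking them is exactly what replaces the leading factor $\N(A)_u$ by $\N(A)_u-1$. In this lemma both factors are positive (since $|V(A)|>1$ gives $\N(A)_u\geq 2$), so this particular omission would not flip the final inequality, but it does invalidate the formula you rely on. Finally, the case $n_2=2$ for $G''$, in which $u$ and $v$ become identified, is not a specialisation of the generic computation and must be checked separately (the paper does so, obtaining $\N(G'')-\N(G)=-(\N(A)_u-1)(n_1-\N(B)_v)$, which happens to agree with the general formula at $n_2=2$); your write-up does not address it.
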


\begin{proof}	
Simply note that
	\begin{align*}
	\N(G)=&\N(G)_{u,v} +\N(G-v)_u +\N(G-u)_v+ \N(G-u-v)\\
	=&~n_1\cdot \N(A)_u \cdot \N(B)_v + n_1(n_2-1)\N(A)_u + (n_2-1)\N(B)_v\\
	& + \N(A-u) + \N(B-v) + \binom{n_1}{2} +  \binom{n_2 -1}{2}\,.
	\end{align*}
Expressions for $\N(G')$ and $\N(G'')$ can be obtained in a similar way. In particular, we get
	\begin{align*}
	\N(G')-\N(G)&=(\N(A)_u -1)(n_1-n_2 - \N(B)_v)\,,\\
	\N(G'')-\N(G)&=-(\N(A)_u -1)(n_1-n_2 +2 - \N(B)_v)
	\end{align*}
after simplification, where in the latter identity it is assumed that $n_2>2$. For $n_2=2$, vertices $u$ and $v$ coincide in $G''$. This gives us
\begin{align*}
\N(G'')_u =(n_1+1) \N(A)_u \cdot \N(B)_v\,, \quad \N(G''-u)= \N(A-u) + \N(B-v) + \binom{n_1+1}{2}\,.
\end{align*}
Thus we get
	$$\N(G'')-\N(G)=-(\N(A)_u -1)(n_1 - \N(B)_v)\,,$$ which completes the proof of the first part.
	
	\medskip
	If $n_1>2$ then the graph $G'$ can be obtained from $G$ by first contracting one vertex, say $x \neq u$ of the pendant path at $u$, and then inserting $x$ into the $u-v$ path (i.e. subdividing one edge). This construction shows that all vertices of $G$ preserve their status (cut vertex or not) in $G'$. Likewise, if $n_2>2$ then the graph $G''$ can be obtained from $G$ by first contracting one vertex, say $x \notin \{u,v\}$ of the $u-v$ path and then inserting $x$ into the pendant path at $u$. Thus we have $c(G'')=c(G)$. On the other hand, if $n_2=2$ and $|V(B)|>1$, then $G''$ can be constructed from $G$ by shrinking the edge $uv$ (i.e. identifying $u$ with $v$) and inserting a new vertex, say $y$ into the pendant path at $u$. Since $y$ is a cut vertex of $G''$ and $v$ is a cut vertex of $G$, we obtain $c(G'')=c(G)$.
\end{proof}

Note that if the graph $B$ in Lemma~\ref{AB} is trivial, then we can assume, without loss of generality that $n_1\geq n_2$. Therefore, we obtain the following important remark:

\begin{remark}\label{Rem:pendantSingleVertex}
Let $G, G',G''$ be the graphs defined in Lemma~\ref{AB}, and assume that $|V(B)|=1$. Then we have $$\N(G)\geq \N(G') \quad \text{and} \quad \N(G)\geq \N(G'')$$ if and only if $|n_1-n_2|\leq 1$. Moreover, if $n_2>2$ then $c(G'')=c(G')=c(G)$.
\end{remark}

\begin{lemma}\label{DeleteAddLeaf}
	Let $H$ be a connected graph and $x,y$ be two distinct vertices of $H$. Let $G$ (resp. $G'$) be obtained from $H$ by attaching a pendant edge at $y$ (resp. at $x$). Then it holds that
	\begin{align*}
	\N(G')-\N(G)=\N(H)_x - \N(H)_y=\N(H-y)_x - \N(H-x)_y\,.
	\end{align*}
	Furthermore, we have $$c(G')=c(G)$$ if and only if $x$ and $y$ have the same status (cut vertex or not) in $H$.
\end{lemma}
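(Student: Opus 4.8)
The plan is to treat the two numerical identities by direct counting and to handle the cut-vertex claim by a vertex-by-vertex status analysis.

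First I would establish the key counting formula for attaching a single pendant edge. Write $y'$ (resp.\ $x'$) for the new leaf of $G$ (resp.\ $G'$). A connected induced subgraph of $G$ either avoids $y'$, in which case it is simply a connected induced subgraph of $H$, or it contains $y'$; in the latter case it is either the singleton $\{y'\}$ or else contains $y'$ together with its unique neighbour $y$, so that its restriction to $V(H)$ is a connected induced subgraph of $H$ through $y$. This gives $\N(G)=\N(H)+1+\N(H)_y$, and symmetrically $\N(G')=\N(H)+1+\N(H)_x$. Subtracting yields $\N(G')-\N(G)=\N(H)_x-\N(H)_y$, the first equality.

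For the second equality I would partition the subgraphs of $H$ through $x$ according to whether they also contain $y$: those that do are counted by $\N(H)_{x,y}$, and those that do not are exactly the connected induced subgraphs of $H-y$ through $x$, so $\N(H)_x=\N(H)_{x,y}+\N(H-y)_x$. By the symmetric identity $\N(H)_y=\N(H)_{x,y}+\N(H-x)_y$, the common term $\N(H)_{x,y}$ cancels on subtraction and the second equality follows.

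For the cut-vertex claim I would compare the status of every vertex across $H$, $G$ and $G'$. Since $x\neq y$ and $H$ is connected with at least two vertices, attaching a pendant edge at $y$ makes $y$ a cut vertex of $G$ (its removal isolates $y'$ from the nonempty graph $H-y$), while leaving the cut-vertex status of every vertex of $V(H)\setminus\{y\}$ unchanged and making $y'$ a non-cut vertex. Hence, writing $a$ (resp.\ $b$) for the indicator that $x$ (resp.\ $y$) is a cut vertex of $H$ and $c_0$ for the number of cut vertices of $H$ lying in $V(H)\setminus\{x,y\}$, one gets $c(G)=c_0+a+1$; the same reasoning applied to $G'$ gives $c(G')=c_0+b+1$. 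Thus $c(G')-c(G)=b-a$, which vanishes exactly when $a=b$, i.e.\ when $x$ and $y$ share the same status in $H$. The only step requiring genuine care is this cut-vertex bookkeeping: one must verify that attaching a pendant preserves the status of all other vertices (so that $c_0$ and the single indicator really transfer between $G$ and $G'$) and that the attachment vertex always becomes a cut vertex, which is where the hypothesis $|V(H)|\geq 2$ (forced by $x\neq y$) is used; the counting identities themselves are routine once the pendant-edge decomposition is in place.
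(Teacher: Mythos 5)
Your proposal is correct and follows essentially the same route as the paper: the same pendant-edge counting (the paper states $\N(G')-\N(G)=\N(H)_x-\N(H)_y$ directly, which your formula $\N(G)=\N(H)+1+\N(H)_y$ merely makes explicit), the same cancellation of $\N(H)_{x,y}$ for the second equality, and the same vertex-by-vertex bookkeeping for the cut-vertex claim (the paper likewise observes that all vertices other than $x,y$ keep their status and that the attachment vertex always becomes a cut vertex). No gaps; your version is just slightly more detailed.
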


\begin{proof}
	For the first part of the lemma, we note that
	\begin{align*}
	\N(G')-\N(G)&=\N(H)_x -\N(H)_y =(\N(H)_{x,y} + \N(H-y)_x) - (\N(H)_{y,x} + \N(H-x)_y)\\
	& = \N(H-y)_x - \N(H-x)_y\,.
	\end{align*}
	All vertices except possibly $x,y$ of $G$ preserve their status (cut vertex or not) in $G'$. By construction, $x$ is a cut vertex of $G'$ and $y$ is a cut vertex of $G$. Thus $c(G')=c(G)$ holds if and only if the status of $x$ in $G$ (thus in $H$) is the same as the status of $y$ in $G'$ (thus in $H$).
\end{proof}

We can now state and prove the main result of this subsection:

\begin{proposition}\label{branchvertcycle} 
The branching vertex in a maximal graph $G$ must lie on the cycle of $G$.
\end{proposition}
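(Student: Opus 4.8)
The plan is to argue by contradiction. Suppose a maximal graph $G$ has a branching vertex $b$ that does \emph{not} lie on its cycle $C$. By Proposition~\ref{Prop:onebranchvertex} this $b$ is the unique branching vertex, so every vertex of $C$ carries at most a pendant path, the vertex $b$ carries $d\geq 2$ pendant paths $Q_1,\dots,Q_d$, and $b$ is joined to some $v_{j_0}\in V(C)$ by a path $P\colon b=p_0,p_1,\dots,p_k=v_{j_0}$ ($k\geq 1$) whose internal vertices have degree $2$ in $G$. I then want to produce a graph in $\mathcal U(n,c)$ with strictly more connected induced subgraphs than $G$, contradicting maximality.

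The central move is to detach the whole bundle $B=Q_1\cup\dots\cup Q_d$ hanging at $b$ and reattach it at a vertex $w$ of $C$. Write $H$ for the graph obtained from $G$ by deleting $B$ (so that $b$ becomes the free endvertex of the path $P$), and let $B^{+}$ be the rooted tree consisting of $B$ together with its branch vertex. Applying Lemma~\ref{G1G2merged} to the two ways of gluing $B^{+}$ onto $H$ at $b$ or at $w$ gives
\begin{align*}
\N(G_w)-\N(G_b)=\bigl(\N(B^{+})_{\mathrm{root}}-1\bigr)\bigl(\N(H)_w-\N(H)_b\bigr),
\end{align*}
where $G_b=G$ and $G_w$ is the relocated graph. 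Since $B^{+}$ is nontrivial the first factor is positive, so everything reduces to the single inequality $\N(H)_w>\N(H)_b$.

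To see that a cycle vertex beats the far endvertex $b$ of a pendant path, I would use the identity of Lemma~\ref{DeleteAddLeaf}, namely $\N(H)_w-\N(H)_b=\N(H-b)_w-\N(H-w)_b$, and compare the two sides: deleting the leaf $b$ leaves the cycle $C$ intact, so $\N(H-b)_w$ still counts the many subgraphs that run through $w$ and around $C$, whereas deleting the cycle vertex $w$ breaks $C$ into a path and $\N(H-w)_b$ only counts subgraphs reaching the leaf $b$; a routing/injection argument then yields $\N(H-b)_w>\N(H-w)_b$. Finally I must keep the number of cut vertices equal to $c$. Relocating $B$ turns $b$ into a leaf (one cut vertex lost) and turns $w$ into a branching vertex; using the bookkeeping identity that the number of cut vertices of a unicyclic graph equals $n$ minus its number of leaves minus its number of degree-$2$ cycle vertices, this count is preserved exactly when $w$ is a cycle vertex carrying \emph{no} attachment. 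Choosing such a bare $w$ then yields the desired member of $\mathcal U(n,c)$ beating $G$.

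I expect the cut-vertex bookkeeping to be the main obstacle, in two respects. First, one must guarantee the existence of a bare cycle vertex $w$; when every vertex of $C$ already carries a pendant path (possible only because $g\in\{3,4\}$ by Proposition~\ref{Prop:Girth3or4}), I would first free one up — for instance by relocating a shortest pendant path off $C$, or by invoking Lemma~\ref{singleBranch} with $M$ taken to be the stem $P$ together with the remaining branches so as to slide the branching onto $C$ — and then recheck the leaf/bare-vertex count (here Remark~\ref{Rem:pendantSingleVertex} should help control the auxiliary lengths). Second, the inequality $\N(H)_w>\N(H)_b$ must be made uniform in the length of the stem and of the paths in $B$; this is precisely where the identity of Lemma~\ref{DeleteAddLeaf} and the fact that $b$ is a leaf of $H$ do the work, since a leaf carries the fewest through-subgraphs while a cycle vertex carries among the most.
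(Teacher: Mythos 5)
Your global strategy (detach the whole bundle at $b$, reattach it on the cycle, and reduce everything to the single inequality $\N(H)_w>\N(H)_b$ via Lemma~\ref{G1G2merged}) is genuinely different from the paper's proof, which never moves the bundle wholesale: it distinguishes the cases $\N(B)_z>m$ and $\N(B)_z\leq m$ (where $m$ is the order of a pendant path at the branching vertex and $B$ the component towards the cycle), sliding the branching vertex one edge closer to the cycle via Lemma~\ref{AB} in the first case, and transplanting a single leaf onto a pendant path of the cycle via Lemma~\ref{DeleteAddLeaf} in the second, with the resulting inequalities checked by explicit computation for girth $3$ and $4$. As written, however, your argument has a real gap at its key step: $\N(H)_w>\N(H)_b$ is only asserted through a ``routing/injection argument,'' and the heuristic behind it (a leaf carries fewer subgraphs than any cycle vertex) is false for general unicyclic graphs, so the inequality must be verified using the specific structure of $H$. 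It does appear to be true here and is fillable: writing $H_0$ for $H$ minus the stem and $k$ for the number of stem vertices beyond $v_{j_0}$, one has $\N(H)_b=k+\N(H_0)_{v_{j_0}}$ and $\N(H)_w=\N(H_0)_w+k\,\N(H_0)_{w,v_{j_0}}$ with $\N(H_0)_{w,v_{j_0}}\geq 2$, and one must then bound $\N(H_0)_{v_{j_0}}-\N(H_0)_w$ case by case --- essentially the computation the paper performs anyway.

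The more serious gap is the cut-vertex bookkeeping. Your relocation preserves $c$ only if the target $w$ is a \emph{bare} cycle vertex, and such a vertex need not exist: girth $3$ with pendant paths at both cycle vertices other than the stem's attachment point is an admissible configuration (it arises, e.g., with $c=n-4$ and two branches at $b$). Neither fallback repairs this. ``Relocating a shortest pendant path off $C$'' is not a defined operation within $\mathcal{U}(n,c)$. And invoking Lemma~\ref{singleBranch} with $M$ equal to the stem (with or without some of the branches) does not give $c(G')=c(G'')=c(G)$, because that conclusion requires \emph{both} $u$ and $v$ to be cut vertices of $M$, whereas the cycle-end $v_{j_0}$ of the stem is an endvertex of $M$ and hence not a cut vertex of it. A direct count confirms the failure: in the transformed graph the far end of the stem becomes a new leaf while no non-cut vertex becomes a cut vertex, so the cut-vertex count drops by one, the graph leaves $\mathcal{U}(n,c)$, and no contradiction with maximality is obtained. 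This is exactly the difficulty the paper's one-vertex-at-a-time transformations are designed to avoid.
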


\begin{proof}
Let $C$ be the cycle of $G$ and $w$ the branching vertex of $G$. Among all vertices of $C$, say without loss of generality that $v_0$ is at minimum distance from $w$ in $G$. Suppose that $w\neq v_0$. Denote by $z$ the neighbour of $w$ (possible $z=v_0$) that lies on the $w-v_0$ path in $G$, and let $P_m$ be a fixed pendant path at $w$. Then $m>1$ since $w$ is a branching vertex of $G$. Denote by $B$ the component of $G-wz$ that contains $z$. We observe two cases:

Case 1: $\N(B)_z>m$.

Let $y'$ be the neighbour of $w$ that lies on $P_m$. Denote by $A$ the component of $G-\{wz,wy'\}$ that contains $w$. Then $|V(A)|>1$ since $w$ is a branching vertex of $G$. Set $u=w$ and $v=z$. From this decomposition, we can apply Lemma~\ref{AB} with $n_1=m$ and $n_2=2$; since $|V(B)|>1$ and by assumption $\N(B)_v > m=n_1=n_1-n_2+2$, the graph $G$ can be transformed into another graph $G''$ that satisfies $c(G'')=c(G)$ and $\N(G'')>\N(G)$. This contradicts the maximality of $G$. Note that the transformation that takes $G$ to $G''$ in Lemma~\ref{AB} decreases the length of the $u-v$ path by exactly $1$.

Case 2: $\N(B)_z \leq m$. 

Let $n_1$ (resp. $n_2$) be the order of the pendant path $G_1$ at $v_1$ (resp. $G_2$ at $v_2$), and $t$ the order of the $w-v_0$ path in $G$. Denote by $x$ the free endvertex (possibly $x=v_1$) of $G_1$, by $u$ the free endvertex of $P_m$, and by $y$ the neighbour of $u$ (possibly $y=w$) in $G$. Note that $G-V(B)$ is a star-like tree rooted at $w$. Set $H=G-u$ and $A=G-(V(B) \cup V(P_m-w))$. Then both $x$ and $y$ are non cut vertices of $H$. This is because $y$ is a pendant vertex of $H$ as $m \geq \N(B)_z>2$. Construct a new graph $G'$ from $G$ by deleting the edge $uy$ and adding the edge $ux$. Note that this transformation that takes $G$ to $G'$ decreases the length of the pendant path $P_m$ at $w$ by exactly $1$. It follows from Lemma~\ref{DeleteAddLeaf} that $c(G')=c(G)$ and that $$\N(G')-\N(G)=\N(H)_x - \N(H)_y.$$ We are going to show that $\N(G') - \N(G)>0$. By setting $H_1=H- V(G_1-v_1)$ and  $H_2=G- V(P_m-w)$, we obtain
\begin{align*}
\N(H)_x = (n_1-1) + \N(H_1)_{v_1}\,, \quad \N(H)_y= (m-2) + \N(H_2)_w\,,
\end{align*}
and thus $ \N(G')-\N(G)=(n_1-1) - (m-2) + \N(H_1)_{v_1} - \N(H_2)_w$. Now we distinguish between two subcases depending on the order of $C$.

Subcase 1: The order of $C$ is $3$.

Let us determine an expression for $\N(H_1)_{v_1}$. It can be decomposed into
\begin{align*}
\N(H_1)_{v_1,v_2,v_0}&=n_2(t-1+(m-1)\N(A)_w)\,, \quad 
\N(H_1-v_0)_{v_1,v_2}=n_2\,,\\
\N(H_1-v_2)_{v_1}&=1+ (t-1) +(m-1)\N(A)_w\,,
\end{align*}
and thus
\begin{align*}
\N(H_1)_{v_1}=\N(H_1)_{v_1,v_2,v_0}+ \N(H_1-v_0)_{v_1,v_2} + \N(H_1-v_2)_{v_1} =(n_2+1)(t+(m-1)\N(A)_w)\,.
\end{align*}
Let us determine an expression for $\N(H_2)_w$. Denote by $Q$ the subgraph of $G$ that consists of the triangle $v_0v_1v_2$ and the pendant paths $G_1$ and $G_2$ at $v_1,v_2$, respectively. We have
\begin{align*}
\N(Q)_{v_0,v_1,v_2}=n_1\cdot n_2\,, \quad \N(Q-v_2)_{v_0,v_1}=n_1\,, \quad \N(Q-v_1)_{v_0}= 1+n_2\,,
\end{align*}
and these quantities imply that
\begin{align*}
\N(Q)_{v_0}&=\N(Q)_{v_0,v_1,v_2} + \N(Q-v_2)_{v_0,v_1} + \N(Q-v_1)_{v_0}=(n_1+1)(n_2+1)\,,\\
\N(B)_z&=t-2+\N(Q)_{v_0} \,, \quad \N(H_2)_w=\N(A)_w(t-1+\N(Q)_{v_0})\,.
\end{align*}
Therefore, we have
\begin{align*}
\N(G')-\N(G)&= (n_1-1) - (m-2) + \N(H_1)_{v_1} - \N(H_2)_w\\
&=n_1+ t(n_2+1) +(m-1)((n_2+1)\N(A)_w -1) - \N(A)_w(t-1+\N(Q)_{v_0})\,,
\end{align*}
and since
\begin{align*}
t-2+\N(Q)_{v_0}=\N(B)_z \leq m\,,
\end{align*}
we deduce that
\begin{align*}
\N(G')-\N(G)&\geq n_1+ t(n_2+1) +(m-1)((n_2+1)\N(A)_w -1) - \N(A)_w (m+1)\\
&=(m-1)(n_2\cdot \N(A)_w -1) - (2\N(A)_w -1) +(n_1-1) +t(n_2+1)\,.
\end{align*}
Note that $\N(A)_w, t\geq 2, ~n_1, n_2\geq 1$ and thus $m\geq 4$. If $n_2>1$ then 
\begin{align*}
\N(G')-\N(G)&\geq (m-1)(n_2\cdot \N(A)_w -1) - (n_2 \cdot \N(A)_w -1) +(n_1-1) +t(n_2+1)\\
&= (m-2)(n_2\cdot \N(A)_w -1)  +(n_1-1) +t(n_2+1)>0\,.
\end{align*}
If $n_2=1$ then 
\begin{align*}
\N(G')-\N(G)&\geq (m-1)(\N(A)_w -1) - (2\N(A)_w -1) +(n_1-1) +2t\\
&=(m-3)(\N(A)_w -1) +n_1 +2(t-1)>0\,.
\end{align*}
For either situation, $G'$ contains more connected induced subgraphs than $G$, a contradiction to the choice of $G$. 

Subscase~2: The order of $C$ is $4$.

Denote by $n_3$ the order of the pendant path $G_3$ at $v_3$. We compute $\N(H_1)_{v_1}$ and $\N(H_2)_w$ in a similar way as in Subcase~1. The quantity $\N(H_1)_{v_1}$ can be decomposed into
\begin{align*}
\N(H_1)_{v_1,v_2,v_3,v_0}&=n_2\cdot n_3(t-1+(m-1)\N(A)_w)\,, \quad 
\N(H_1-v_0)_{v_1,v_2,v_3}=n_2\cdot n_3\,,\\
 \N(H_1-v_3)_{v_1,v_2}&=n_2(1+ (t-1) +(m-1)\N(A)_w)\,, \\
  \N(H_1-v_2)_{v_1}&=1+ (1+n_3)((t-1) +(m-1)\N(A)_w)\,,
\end{align*}
and thus
\begin{align*}
\N(H_1)_{v_1}&=\N(H_1)_{v_1,v_2,v_3,v_0}+ \N(H_1-v_0)_{v_1,v_2,v_3}+ \N(H_1-v_3)_{v_1,v_2} + \N(H_1-v_2)_{v_1}\\
&=n_2\cdot n_3 +n_2+1 + (t-1 +(m-1)\N(A)_w)(n_2\cdot n_3+n_2+n_3+1)\,.
\end{align*}
Denote by $Q$ the subgraph of $G$ that consists of the square $v_0v_1v_2v_3$ and the pendant paths $G_1,G_2,G_3$ at $v_1,v_2,v_3$, respectively. We have
\begin{align*}
\N(Q)_{v_0,v_1,v_2,v_3}&=n_1\cdot n_2\cdot n_3\,, \quad \N(Q-v_3)_{v_0,v_1,v_2}=n_1\cdot n_2\,,\\
\N(Q-v_2)_{v_0,v_1}&= n_1(1+n_3)\,, \quad \N(Q-v_1)_{v_0}= 1+ n_3(1+n_2)\,,
\end{align*}
and these quantities imply that
\begin{align*}
\N(Q)_{v_0}&=(n_1+1)(n_2+1)(n_3+1) - n_2\,, \quad \N(B)_z=t-2+\N(Q)_{v_0}\,, \\ \quad \N(H_2)_w&=\N(A)_w(t-1+\N(Q)_{v_0})\,,
\end{align*}
and
\begin{align*}
\N(G')-\N(G)=&~(n_1-1) - (m-2) + \N(H_1)_{v_1} - \N(H_2)_w\\
=&~n_2\cdot n_3 +n_2+n_1 +1 +(t-1)(n_2\cdot n_3 +n_2+n_3+1) \\
&+ (m-1)((n_2\cdot n_3 +n_2+n_3 +1)\N(A)_w -1) - \N(A)_w(t-1+\N(Q)_{v_0})\,.
\end{align*}
Since
\begin{align*}
t-2+\N(Q)_{v_0}=\N(B)_z \leq m\,,
\end{align*}
we deduce that
\begin{align*}
\N(G')-\N(G)\geq&~ n_2\cdot n_3 +n_2+n_1 +1 + (t-1)(n_2\cdot n_3 +n_2+n_3+1)\\
&+(m-1)((n_2\cdot n_3 +n_2+n_3 +1)\N(A)_w -1) - \N(A)_w(m+1)\\
=&~n_2\cdot n_3 +n_2+n_1 +(t-1)(n_2\cdot n_3 +n_2+n_3+1)\\
& + (m-1)((n_2\cdot n_3 +n_2+n_3)\N(A)_w -1) -(2 \N(A)_w -1)\,.
\end{align*}
Using the inequality $n_2\cdot n_3 +n_2+n_3>2$, we derive that
\begin{align*}
\N(G')-\N(G) >&~ n_2\cdot n_3 +n_2+n_1 +(t-1)(n_2\cdot n_3 +n_2+n_3+1)\\
& + (m-2)((n_2\cdot n_3 +n_2+n_3)\N(A)_w -1)>0\,.
\end{align*}
This is again a contradiction to the choice of $G$.

\medskip
Summing up, we have proved that $w\neq v_0$ is impossible. Hence $w$ must belong to the cycle of $G$.
\end{proof}

Let us summarise in the following definition what we already know about the structure of those maximal graphs in $\mathcal{U}(n,c)$.

\begin{definition}\label{FirstSummary}
Let $G\in \mathcal{U}(n,c)$ such that $G$ has the maximum number of connected induced subgraphs. By Proposition~\ref{Prop:Girth3or4} the order of the cycle $C$ of $G$ is $3$ or $4$. 
\begin{itemize}
\item If $C$ is of order $3$, then we denote by $v_0,v_1,v_2$ the vertices of $C$ in this order, and by $G_0,G_1,G_2$ the components of $G-\{v_0v_1,v_1v_2,v_2v_0\}$ that contain $v_0,v_1,v_2$, respectively.
\item If $C$ is of order $4$, then we denote by $v_0,v_1,v_2,v_3$ the vertices of $C$ in this order, and by $G_0,G_1,G_2,G_3$ the components of $G-\{v_0v_1,v_1v_2,v_2v_3,v_3v_0\}$ that contain $v_0,v_1,v_2,v_3$, respectively. In this case, by Proposition~\ref{Prop:Girth3or4} it holds that $$\N(G_2)_{v_2} =\max_{1\leq j \leq 4} \N(G_j)_{v_j} ~~\text{and that} ~~ \N(G_2)_{v_2} \leq \N(G_3)_{v_3}(1+\N(G_1)_{v_1}).$$
\end{itemize}
If $G$ has a branching vertex, then by Propositions~\ref{Prop:onebranchvertex} and ~\ref{branchvertcycle} this vertex is unique and must lie on $C$. Suppose that $v_2$ is the branching vertex of $G$. Then depending on the order of $C$, the graphs $G_0,G_1$ are pendant paths at $v_0,v_1$, respectively, and the graphs $G_0,G_1,G_3$ are pendant paths at $v_0,v_1,v_3$, respectively.
\end{definition}

Unless otherwise specified, we follow the notation given in Definition~\ref{FirstSummary}.

\subsection{Pendant paths have orders' difference at most $1$}\label{Subsec:almostequalpendantpath}
The first part of the next lemma is a refinement of~\cite[Lemma~6]{AudaceCut2019}. Lemma~\ref{PathOrder} below gives some information about the pendants paths at adjacent vertices of the cycle of every maximal graph.

\begin{lemma}\label{PathOrder}
Let $H$ be a connected graph and $uv$ an edge of $H$. Let $H(n_1;n_2)$ be the graph obtained from $H$ and two vertex disjoint paths $P_{n_1}$ and $P_{n_2}$ by identifying $u$ with an endvertex of $P_{n_1}$, and $v$ with an endvertex of $P_{n_2}$. Assume that $1\leq n_1\leq n_2-2$ and that $u$ is not a pendant edge of $H$. Then the following hold:
	\begin{enumerate}[i)]
		\item We have
		\begin{align*}
		\N(H(n_1;n_2)) < \N(H(n_1+1;n_2-1))\,.
		\end{align*}
		\item Furthermore, we have $$c(H(n_1+1;n_2-1))=c(H(n_1;n_2))\,,$$ unless $n_1=1$ and $u$ is a cut vertex of $H$.
	\end{enumerate}	
\end{lemma}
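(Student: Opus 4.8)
The plan is to establish i) by a direct count of the connected induced subgraphs of $G:=H(n_1;n_2)$, extract a clean formula for the difference, and then reduce positivity to two elementary inequalities; part ii) is a bookkeeping argument about which vertices change cut-vertex status.

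For i), abbreviate $a=\N(H)_u$, $b=\N(H)_v$, $d=\N(H)_{u,v}$. I would classify each connected induced subgraph $S$ of $G$ by $S_H:=S\cap V(H)$. If $S$ meets no vertex of $H$, it is an interval inside one of the two pendant paths, contributing $\binom{n_1}{2}+\binom{n_2}{2}$ in total. Otherwise $S_H$ is a connected induced subgraph of $H$, and connectivity forces the part of $S$ lying on the $u$-path to be a prefix starting at $u$; hence $S$ extends along the $u$-path in exactly $n_1$ ways if $u\in S_H$ and in $1$ way otherwise, and independently along the $v$-path in $n_2$ ways if $v\in S_H$ and $1$ way otherwise. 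Summing over the four cases according to whether $u,v\in S_H$ yields a closed form for $\N(G)$ in terms of $a,b,d,\N(H)$ and the binomials. Substituting $n_1\mapsto n_1+1$, $n_2\mapsto n_2-1$ and cancelling, I expect
\[
\N(H(n_1+1;n_2-1))-\N(H(n_1;n_2))=(n_2-n_1-1)(d-1)+(a-b).
\]
Writing $a=d+\N(H-v)_u$ and $b=d+\N(H-u)_v$ (a subgraph through $u$ either does or does not contain $v$), the last bracket becomes $\N(H-v)_u-\N(H-u)_v$.

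Proving that this difference is positive is the heart of the argument. I would use three facts: $n_2-n_1-1\ge 1$ by hypothesis; the injection $T\mapsto T\cup\{u\}$ from connected induced subgraphs of $H-u$ through $v$ into connected induced subgraphs of $H$ through $u$ and $v$ (connected since $u\sim v$), giving $\N(H-u)_v\le d$; and finally $\N(H-v)_u\ge 2$. The last inequality is exactly where the hypothesis enters: ``$u$ is not a pendant edge of $H$'' means $\deg_H(u)\ge 2$, so $u$ has a neighbour $w\ne v$, and then $\{u\}$ and $\{u,w\}$ are two distinct connected induced subgraphs of $H-v$ containing $u$. Combining,
\[
\N(H(n_1+1;n_2-1))-\N(H(n_1;n_2))\ge (d-1)+\N(H-v)_u-\N(H-u)_v\ge \N(H-v)_u-1\ge 1>0 .
\]
The main obstacle is spotting the right injection and recognising that this single hypothesis is precisely what rules out the degenerate equality case (for example $H$ a single edge, where the difference vanishes).

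For ii), I would observe that the rebalancing touches only the two pendant paths, so every vertex of $V(H)\setminus\{u,v\}$ keeps the cut-vertex status it has in $H$, in both graphs. Since $n_2\ge 3$ and $n_2-1\ge 2$, the vertex $v$ carries a nontrivial pendant path in each graph and is a cut vertex of both. A degree-$2$ interior vertex of a pendant path is always a cut vertex and a free endvertex never is, so when $n_1\ge 2$ both graphs have the same number $n_1+n_2-4$ of path-interior cut vertices and $u$ is a cut vertex of both; hence equality. When $n_1=1$ the $u$-path is trivial in $H(1;n_2)$, where there are $n_2-2$ path-interior cut vertices and $u$ is a cut vertex iff it is one in $H$; passing to $H(2;n_2-1)$ the interior count drops to $n_2-3$ while $u$ acquires an order-$2$ pendant path and so becomes a cut vertex. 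The gain at $u$ offsets the loss along the path exactly when $u$ was not already a cut vertex of $H$; otherwise the total drops by one. This gives $c(H(n_1+1;n_2-1))=c(H(n_1;n_2))$ in every case except $n_1=1$ with $u$ a cut vertex of $H$, as claimed.
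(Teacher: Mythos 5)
Your proposal is correct and follows essentially the same route as the paper: the same closed-form count of $\N(H(n_1;n_2))$ (which the paper cites from an earlier work rather than rederiving), the same difference $(n_2-n_1-1)(\N(H)_{u,v}-1)+\N(H-v)_u-\N(H-u)_v$, and the same two key inequalities — the injection $T\mapsto T\cup\{u\}$ giving $\N(H-u)_v\le \N(H)_{u,v}$ and the neighbour $w\ne v$ giving $\N(H-v)_u\ge 2$ — which the paper merely packages as the single bound $\N(H)_u\ge\N(H-u)_v+2$. Part ii) is likewise the paper's argument (the paper phrases the $n_1=1$ case as contracting the far end of $P_{n_2}$ and reattaching it as a pendant edge at $u$), with your version slightly more explicit about the bookkeeping.
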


\begin{proof}
It was shown in the proof of~\cite[Lemma~6]{AudaceCut2019} that
	\begin{align*}
	\N(H(n_1;n_2))=&~n_1\cdot n_2\cdot \N(H)_{u,v}+n_1\cdot \N(H-v)_u + n_2\cdot \N(H-u)_v\\
	&+ \binom{n_1}{2}+  \binom{n_2}{2} +\N(H-u-v)\,.
	\end{align*}
	Thus 
	\begin{align*}
	\N(H(n_1;n_2))- &\N(H(n_1+1;n_2-1))\\
	&=(n_1-n_2+1)(\N(H)_{u,v} -1) +  \N(H-u)_v -  \N(H-v)_u\,,
	\end{align*}
obtained after simplification. On the other hand, one can add the edge $uv$ to every $v$-containing connected induced subgraph of $H-u$ to obtain a $u$-containing connected subgraph of $H$, which can then be extended to an induced subgraph of $H$. Therefore we get
	\begin{align*}
	\N(H)_{u,v} + \N(H-v)_u = \N(H)_u\geq \N(H-u)_v +2\,,
	\end{align*}
	where the final $2$ counts the subgraphs $u$ and $uw$ ($w \neq v$ is a neighbour of $u$ in $H$). This implies that $\N(H-u)_v -  \N(H-v)_u \leq \N(H)_{u,v} -2$. Hence
	\begin{align*}
	\N(H(n_1;n_2))- \N(H(n_1+1;n_2-1))\leq (n_1-n_2+2)(\N(H)_{u,v} -1) -1 <0\,,
	\end{align*}
	which proves i). We now consider proving ii). If $n_1>1$, then it is clear that the number of cut vertices is preserved when passing from $H(n_1;n_2)$ to $H(n_1+1;n_2-1)$. Assume that $n_1=1$ and that $u$ is not a cut vertex of $H$ (thus of $H(n_1;n_2)$). Denote by $w$ the free endvertex of $P_{n_2}$ in $H(1;n_2)$, and by $w'$ the neighbour of $w$. Then the graph $H(2;n_2-1)$ can be constructed from $H(1;n_2)$ by contracting $w'$ and adding a pendant edge $uw'$. By so doing $w'$ (resp. $u$) becomes a non cut vertex (resp. cut vertex) of $H(2;n_2-1)$. Hence $c(H(2;n_2-1))=c(H(1;n_2))$.
\end{proof}

\begin{proposition}\label{Prop:almostequallength}
If $G$ is a maximal graph, then the orders of the pendants paths at
\begin{enumerate}[i)]
\item  adjacent vertices of $C$ must be as equal as possible;
\item  the branching vertex (if any) of $G$ must be as equal as possible.
\end{enumerate}
\end{proposition}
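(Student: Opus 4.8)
The plan is to prove both parts by contradiction: in each case I assume that two pendant paths have orders differing by at least $2$, and then produce a graph in $\mathcal{U}(n,c)$ with strictly more connected induced subgraphs but the same number of cut vertices, contradicting maximality. The required transformations are already available: Lemma~\ref{PathOrder} moves weight between two paths hanging from the two ends of an edge (this drives part i)), while Lemma~\ref{AB} together with Remark~\ref{Rem:pendantSingleVertex} balances two paths hanging from a common vertex (this drives part ii)).

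For part i), I would fix adjacent vertices $v_i,v_{i+1}$ of $C$, choose a pendant path $P$ at $v_i$ and a pendant path $Q$ at $v_{i+1}$ of orders $a<b$ with $b-a\geq 2$, and form $H$ by deleting from $G$ all vertices of $P$ and $Q$ except $v_i$ and $v_{i+1}$. Since $P$ and $Q$ are pendant paths, $H$ still contains all of $C$, and $G=H(a;b)$ in the sense of Lemma~\ref{PathOrder}, with $u=v_i$, $v=v_{i+1}$, $n_1=a\leq n_2-2=b-2$, and $uv=v_iv_{i+1}$ a surviving cycle edge. Because $u$ lies on $C$ it has a neighbour other than $v$, so the hypothesis of Lemma~\ref{PathOrder} holds; part i) of that lemma gives $\N(H(a+1;b-1))>\N(G)$, and since the rebalancing leaves the cycle intact the new graph is again unicyclic on $n$ vertices.

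The hard part is the cut-vertex bookkeeping, concentrated in the single exceptional case of Lemma~\ref{PathOrder} ii), namely $n_1=a=1$ with $u$ a cut vertex of $H$. I would discharge it as follows. If $a\geq 2$ there is nothing to check. If $a=1$ then $P$ is the trivial pendant path, so $v_i$ has no neighbour off $C$; in particular $v_i$ is not the branching vertex, since the pendant paths at the branching vertex, each reaching a child distinct from the root, all have order at least $2$. Hence $v_i$ has degree $2$ in $H$—its only neighbours being its two cycle-neighbours—and a degree-$2$ vertex on the cycle of $H$ is not a cut vertex of $H$. Thus the exception never arises, Lemma~\ref{PathOrder} ii) yields $c(H(a+1;b-1))=c(G)$, and maximality is contradicted.

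For part ii), let $w$ be the branching vertex, which is unique and lies on $C$ by Propositions~\ref{Prop:onebranchvertex} and~\ref{branchvertcycle}, and suppose two of its pendant paths have orders $n_1\leq n_2$ with $n_2\geq n_1+2$. I would regard these two paths as the prescribed paths of Lemma~\ref{AB}, taking $u=w$, $B$ trivial (the free endvertex of the $n_2$-path), and $A$ the remainder of $G$, so that $|V(A)|>1$. Because both paths emanate from a branching vertex, $n_1\geq 2$ and hence $n_2>2$. Remark~\ref{Rem:pendantSingleVertex} then shows that $|n_1-n_2|\leq 1$ fails, so one of $G',G''$ has strictly more connected induced subgraphs than $G$; and since $n_2>2$, that same graph has the same number of cut vertices as $G$, again contradicting maximality and finishing the proof. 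I expect the only genuinely delicate point throughout to be ruling out the cut-vertex exception in part i); everything else is a direct application of the transformation lemmas.
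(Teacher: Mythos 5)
Your proof is correct and follows essentially the same route as the paper: part i) is exactly the paper's application of Lemma~\ref{PathOrder} to $H=G-(V(P-v_i)\cup V(Q-v_{i+1}))$, and part ii) is exactly the paper's application of Remark~\ref{Rem:pendantSingleVertex} with $B$ trivial and $n_2>2$. If anything, your discharge of the exceptional case $a=1$ in Lemma~\ref{PathOrder}~ii) (noting that a cycle vertex with only the trivial pendant path has degree $2$ in $H$ and hence is not a cut vertex) is spelled out more carefully than the paper's bare assertion that $u$ and $v$ are non-cut vertices of $H$.
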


\begin{proof}
\begin{enumerate}[i)]
\item  Let $u,v$ be two adjacent vertices of $C$, and $P_{n_1}, P_{n_2}$ be pendant paths at $u,v$, respectively such that $n_1\leq n_2$. Set $H=G-(V(P_{n_1}-u) \cup V(P_{n_2}-v))$. Then both $u$ and $v$ are non cut vertices of $H$. By Lemma~\ref{PathOrder} the inequality $n_1\leq n_2-2$ is impossible since otherwise, a new graph $G'$ satisfying $\N(G')>\N(G)$ and $c(G')=c(G)$ could be constructed from $G$. Hence we must have $n_2-1 \leq n_1 \leq n_2$, that is $|n_1-n_2|\leq 1$ holds.
\item Let $w$ be the branching vertex of $G$ and $P_{n_1}, P_{n_2}$ two pendant paths at $w$. Then $n_1,n_2>1$, and we can assume that $n_1\leq n_2$. If $n_2=2$ then $n_1=2$ as well, and we are done in this case. Otherwise $n_2>2$. By setting $A=G-(V(P_{n_1}-w) \cup V(P_{n_2}-w))$ and invoking Remark~\ref{Rem:pendantSingleVertex} (see Subsection~\ref{Subsec:branchvertexcycle}), we obtain $|n_1-n_2|\leq 1$, which completes the proof.
\end{enumerate}
\end{proof}

\begin{proposition}\label{branchminorder}
Suppose that $G$ is a maximal graph whose branching vertex is $v_0$. Assume that $n_0$ is the minimum order among the pendants paths at $v_0$. Then the order of every pendant path at a neighbour of $v_0$ in $C$ is at most $n_0$.
\end{proposition}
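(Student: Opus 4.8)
The plan is to argue by contradiction. Assume some neighbour $w$ of $v_0$ on $C$ carries a pendant path of order $p>n_0$, and produce a graph in $\mathcal{U}(n,c)$ with strictly more connected induced subgraphs, contradicting maximality. Two structural facts will be used throughout. First, since $v_0$ is the \emph{only} branching vertex (Propositions~\ref{Prop:onebranchvertex} and~\ref{branchvertcycle}), the rooted tree at $v_0$ is a spider consisting of at least two pendant paths, each of order at least $n_0\ge 2$; hence, after deleting one pendant path of minimum order $n_0$, the product $\beta$ of the orders of the remaining pendant paths at $v_0$ satisfies $\beta\ge 2$. Second, by Definition~\ref{FirstSummary} every cycle vertex other than $v_0$ carries a single pendant path.

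First I would reduce to the case $p=n_0+1$ using Lemma~\ref{PathOrder}. Take $u=v_0$, $v=w$, let $P_{n_1}$ be a minimum pendant path at $v_0$ (so $n_1=n_0$) and $P_{n_2}$ the pendant path at $w$ (so $n_2=p$), and let $H$ be $G$ with these two paths deleted except for their roots $v_0,w$. The cycle edge $v_0w$ lies in $H$ and $v_0$ has degree at least $2$ there, so Lemma~\ref{PathOrder} applies; as $n_0\ge 2$ its cut-vertex clause has no exception. Thus if $p\ge n_0+2$ we could rebalance the two paths to strictly increase $\N$ while keeping $c$ fixed, a contradiction. Hence $p\le n_0+1$, and it remains to exclude $p=n_0+1$.

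For the case $p=n_0+1$ I would apply the leaf-relocation of Lemma~\ref{DeleteAddLeaf}. Let $u$ be the free endvertex of the pendant path at $w$, $y$ its neighbour, $x$ the free endvertex of a minimum pendant path at $v_0$, and put $H=G-u$; form $G'$ by deleting the leaf at $y$ and attaching a pendant edge at $x$ (shortening $w$'s path by one and lengthening the minimum path at $v_0$ by one). In $H$ both $x$ and $y$ are leaves, hence non-cut, so Lemma~\ref{DeleteAddLeaf} gives $c(G')=c(G)$ and $\N(G')-\N(G)=\N(H)_x-\N(H)_y$. The point of reducing to $p=n_0+1$ is that then the minimum path at $v_0$ and the truncated path at $w$ both have order $n_0$ in $H$, so writing $\N(H)_x=(n_0-1)+\N(H_0)_{v_0}$ and $\N(H)_y=(n_0-1)+\N(H_w)_w$ (where $H_0$, resp. $H_w$, is $H$ with the minimum path at $v_0$, resp. the path at $w$, deleted except for its root) the path contributions cancel and it suffices to prove $\N(H_0)_{v_0}>\N(H_w)_w$.

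The main computation is to evaluate these two counts. Since the pendant paths at $v_0$ meet the rest of $H$ only at $v_0$, one factors $\N(H_0)_{v_0}=\beta\cdot G_{v_0}$, where $G_{v_0}$ counts the connected subgraphs through $v_0$ of the cycle together with the pendant paths at the cycle vertices; likewise, absorbing \emph{all} of $v_0$'s pendant paths into the single factor $\alpha=\beta n_0$, one gets the analogous expression for $\N(H_w)_w$. Summing over the connected vertex-subsets of $C$ through the relevant vertex gives, for girth $3$ (third cycle vertex carrying a path of order $n'$), $G_{v_0}=(1+n_0)(1+n')$ and $\N(H_w)_w=(1+\alpha)(1+n')$, whence $\N(H_0)_{v_0}-\N(H_w)_w=(\beta-1)(1+n')>0$; and for girth $4$ with $w=v_1$ and the paths at $v_2,v_3$ of orders $n_2,n_3$, the same bookkeeping yields a difference of $(\beta-1)(1+n_2)(1+n_3)-\beta n_2+n_3$, which rearranges to $(\beta-1)-n_2+\beta n_3+(\beta-1)n_2n_3$ and is positive because $\beta\ge 2$ forces $(\beta-1)n_2n_3\ge n_2$ while the leftover $(\beta-1)+\beta n_3$ is positive. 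Either way $\N(G')>\N(G)$, the desired contradiction; the argument for the other neighbour of $v_0$ is symmetric. I expect the main obstacle to be exactly this last step: organising the case analysis over the connected subsets of the $4$-cycle to reach the closed forms and then verifying positivity, since the length deficit works against us and is overcome only by the branching factor $\beta-1\ge 1$.
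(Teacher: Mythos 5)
Your proof is correct and follows essentially the same route as the paper's: reduce to the case $p=n_0+1$ via the path-rebalancing lemma, relocate the leaf with Lemma~\ref{DeleteAddLeaf}, and compute $\N(H)_x-\N(H)_y$ by factoring out the spider at $v_0$; your closed forms $(\beta-1)(1+n')$ and $(\beta-1)-n_2+\beta n_3+(\beta-1)n_2n_3$ agree exactly with the paper's expressions (with $\beta=\N(A)_{v_0}$). The only cosmetic difference is that your positivity argument in the girth-$4$ case uses only $\beta\ge 2$ and $n_3\ge 1$, whereas the paper also invokes $n_2\le 1+n_3$.
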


\begin{proof}
Denote by $x$ the free endvertex of the pendant path $P_{n_0}$ at $v_0$, and by $u_0$ (possibly $u_0=x$) the neighbour of $v_0$ on the pendant path $P_{n_0}$. Let $A$ be the component of $G_0-u_0v_0$ that contains $v_0$.
	
Case 1: $C$ is of order $3$.

Denote by $n_1,n_2$ the orders of the pendant paths $G_1,G_2$ at $v_1,v_2$, respectively. Assume without loss of generality that $n_2\leq n_1$. Suppose to the contrary that $n_1>n_0$. Then $n_1=n_0+1>2$ by virtue of Proposition~\ref{Prop:almostequallength}. Let $u$ be the free endvertex of $G_1$ at $v_1$, and $y$ the neighbour of $u$ in $G$. Let $G'$ be the graph constructed from $G$ by deleting the edge $uy$ and adding the edge $ux$. Set $H=G-u$, and note that both $x$ and $y$ are pendant vertices of $H$. Then Lemma~\ref{DeleteAddLeaf} yields $c(G')=c(G)$ and $\N(G')-\N(G)=\N(H)_x - \N(H)_y$.

Let us derive an expression for both $\N(H)_x$ and $\N(H)_y$. Denote by $Q$ the subgraph of $G$ that consists of the triangle $v_0v_1v_2$ and the pendant paths $G_1-u$ and $G_2$. Thus $\N(Q)$ can be decomposed into
\begin{align*}
\N(Q)_{v_0,v_1,v_2}=n_2(n_1-1)\,, \quad \N(Q-v_2)_{v_0,v_1}=n_1-1\,, \quad \N(Q-v_1)_{v_0}=1+n_2\,,
\end{align*}
and these quantities imply that
\begin{align*}
\N(Q)_{v_0}=n_2(n_1-1)+(n_1-1) +1+n_2\,,\quad \N(H)_x=(n_0-1)+\N(A)_{v_0} \N(Q)_{v_0}\,.
\end{align*}
Likewise, by setting $H'=G-V(G_1-v_1)$, we can decompose $\N(H')$ into the following quantities:
\begin{align*}
\N(H')_{v_1,v_0,v_2}=n_2\cdot n_0\cdot \N(A)_{v_0}\,, \quad \N(H'-v_2)_{v_1,v_0}=n_0\cdot \N(A)_{v_0}\,, \quad \N(H'-v_0)_{v_1}=1+n_2\,.
\end{align*}
Thus
\begin{align*}
\N(H')_{v_1}=(n_0\cdot n_2+n_0)\N(A)_{v_0} +1+n_2\,,\quad \N(H)_y=(n_1-2)+\N(H')_{v_1}\,.
\end{align*}
Since $n_1=n_0+1$, it follows (after simplification) that
\begin{align*}
\N(G')-\N(G)=&\N(H)_x - \N(H)_y\\
=&(n_0-1)+\N(A)_{v_0} \N(Q)_{v_0} - (n_1-2) - \N(H')_{v_1}\\
=&(1+n_2)(\N(A)_{v_0} -1)  >0\,,
\end{align*}
a contradiction to the maximality of $G$.

Case~2: $C$ is of order $4$.

The reasoning is essentially the same as in Case~1 with a minor revision of the notation. Denote by $n_1,n_2,n_3$ the orders of the pendant paths $G_1,G_2,G_3$ at $v_1,v_2,v_3$, respectively. Assume without loss of generality that $n_3\leq n_1$. Suppose to the contrary that $n_1>n_0$. Then $n_1=n_0+1>2$ by virtue of Proposition~\ref{Prop:almostequallength}. Let $u$ be the free endvertex of the pendant path $G_1$ at $v_1$, and $y$ the neighbour of $u$ in $G$. Construct a new graph $G'$ from $G$ by deleting the edge $uy$ and adding the edge $ux$. By setting $H=G-u$ and invoking Lemma~\ref{DeleteAddLeaf}, we get $c(G')=c(G)$ and $\N(G')-\N(G)=\N(H)_x - \N(H)_y$. Now we consider evaluating $\N(H)_x- \N(H)_y$.

Denote by $Q$ the subgraph of $G$ that consists of the square $v_0v_1v_2v_3$ and the pendant paths $G_1-u, G_2$ and $G_3$. We have
\begin{align*}
\N(Q)_{v_0,v_1,v_2,v_3}&=n_2\cdot n_3(n_1-1)\,, \quad \N(Q-v_3)_{v_0,v_1,v_2}=n_2(n_1-1)\,, \\
 \N(Q-v_2)_{v_0,v_1}&=(n_1-1)(1+n_3)\,, \quad \N(Q-v_1)_{v_0}=1+n_3(1+n_2)\,,
\end{align*}
and thus it holds that
\begin{align*}
\N(Q)_{v_0}&=(n_1-1)(n_2 \cdot n_3+n_2+1+n_3)+1+n_3(1+n_2)\,,\\
\N(H)_x&=(n_0-1)+\N(A)_{v_0} \N(Q)_{v_0}\,.
\end{align*}
Likewise, by setting $H'=G-V(G_1-v_1)$, the quantity $\N(H')$ can be decomposed into
\begin{align*}
\N(H')_{v_1,v_0,v_3,v_2}&=n_2\cdot n_3 \cdot n_0\cdot \N(A)_{v_0}\,, \quad \N(H'-v_2)_{v_1,v_0,v_3}=n_3\cdot n_0 \cdot \N(A)_{v_0}\,, \\ \N(H'-v_3)_{v_1,v_0}&=(n_2+1) n_0 \cdot \N(A)_{v_0}\,, \quad \N(H'-v_0)_{v_1}=1+n_2(1+n_3)\,,
\end{align*}
and thus
\begin{align*}
\N(H')_{v_1}&=(n_2 \cdot n_3+n_3+n_2+1)n_0\cdot \N(A)_{v_0} + 1+n_2(1+n_3)\,,\\
\N(H)_y&=(n_1-2)+\N(H')_{v_1}\,.
\end{align*}
Since $n_1=n_0+1$ and $n_2\leq 1+n_3$ (see Proposition~\ref{Prop:almostequallength}), it follows that
\begin{align*}
\N(G')-\N(G)&= (n_0-1)+\N(A)_{v_0} \N(Q)_{v_0} - (n_1-2) - \N(H')_{v_1}\\
&= (1+n_3 +n_3\cdot n_2)\N(A)_{v_0} - (1+n_2+n_3\cdot n_2)\\
&\geq  2(1+n_3 +n_3\cdot n_2) - (1+n_2+n_3\cdot n_2)\\
&= (1+n_3)-n_2 +n_3(n_2+1)>0\,.
\end{align*}
Hence, a contradiction to the choice of $G$.

This completes the proof of the proposition.
\end{proof}

We recall that $n>3$ and $0<c<n-2$ are fixed integers and that $G\in \mathcal{U}(n,c)$ is an arbitrary unicyclic graph with order $n$ and $c$ cut vertices that has the maximum number of connected induced subgraphs. From here onwards, we consistently assume that $v_2$ is the branching vertex (if any) of $G$. 

\medskip
We are now ready to characterise those maximal graphs with girth $3$. 

\subsection{Maximal graphs with girth $3$}\label{Subsec:girth3structure}
The following description for the graph $\Delta_{n,c}$ is also given in Section~\ref{Intro:main}.
\begin{definition}
Let $r$ be the residue of $n-3$ modulo $n-c$ and $q=\lfloor (n-3)/(n-c) \rfloor$. Set $m_j=q+1$ for all $1\leq j \leq r$, and $m_j=q$ for all $r+1\leq j \leq n-c$. Then we define $\Delta_{n,c}$ to be the graph constructed from the triangle $v_0v_1v_2$ by attaching $n-c-2$ pendant paths of respective lengths $m_1,m_2,\ldots,m_{n-c-2}$ at $v_2$, one pendant path of length $m_{n-c-1}$ at $v_1$, and one pendant path of length $m_{n-c}$ at $v_0$.
\end{definition}
Note that $\Delta_{n,c}$ has $n$ vertices of which $c$ are cut vertices.

\begin{proposition}\label{Prop:Delta3}
If the girth of $G$ is $3$, then $G$ is isomorphic to $\Delta_{n,c}$.
\end{proposition}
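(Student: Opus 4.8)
The plan is to show that when the girth of $G$ is $3$, all the accumulated structural constraints force $G$ into exactly the configuration defining $\Delta_{n,c}$. First I would invoke Definition~\ref{FirstSummary} together with Propositions~\ref{Prop:onebranchvertex} and~\ref{branchvertcycle}: the cycle $C$ is the triangle $v_0v_1v_2$, and any branching vertex is unique and lies on $C$; by the standing convention we may take it to be $v_2$. I would then argue that $G$ must in fact \emph{have} a branching vertex on $C$ whenever $c$ is large enough to require more than three pendant paths—more precisely, since $G$ has $n$ vertices and $c$ cut vertices, the trees $G_0,G_1,G_2$ hanging off the triangle are all pendant paths (a tree with no branching vertex attached at a single root is a path), so all non-cycle structure consists of pendant paths attached at $v_0,v_1,v_2$. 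The number of such paths beyond the trivial ones is governed by $c$, and any vertex of $C$ carrying two or more nontrivial pendant paths is the branching vertex.

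Next I would pin down \emph{where} the pendant paths attach. The key input is Proposition~\ref{branchminorder}: if $v_2$ is the branching vertex with minimum pendant-path order $n_0$, then every pendant path at a neighbour of $v_2$ on $C$ has order at most $n_0$. In a triangle every vertex is adjacent to $v_2$, so the single pendant paths at $v_0$ and $v_1$ each have order at most the minimum order among the paths at $v_2$. I would combine this with Proposition~\ref{Prop:almostequallength}: the orders of the pendant paths at the branching vertex $v_2$ differ by at most $1$, and the orders at adjacent vertices of $C$ are as equal as possible. Taken together, these say that all $n-c$ pendant paths have orders differing by at most $1$, with the shorter ones (length $q$) and longer ones (length $q+1$) distributed so that the paths at $v_0,v_1$ are among the shortest. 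This is exactly the prescription $m_j=q+1$ for $1\le j\le r$ and $m_j=q$ otherwise, with the two paths at $v_0,v_1$ carrying indices $n-c-1,n-c$.

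I would then verify the bookkeeping: counting cut vertices and total order. The triangle contributes its three vertices; of these, $v_2$ is a cut vertex (being branching), while $v_0,v_1$ are cut vertices precisely when their attached paths are nontrivial. Each pendant path of order $m$ attached at a cycle vertex contributes $m-1$ new vertices and $m-1$ cut vertices (the internal vertices plus the attachment point, minus the free endvertex). Summing $\sum_j m_j = (n-3)$ recovers the $n$ vertices, and the cut-vertex count recovers $c$; this is consistent with the definition of $\Delta_{n,c}$ and with the closing remark that $\Delta_{n,c}$ has $n$ vertices of which $c$ are cut vertices. The main obstacle I anticipate is the \emph{optimality of the distribution of path lengths among the three cycle vertices}: Proposition~\ref{branchminorder} forces the short paths to sit at $v_0,v_1$, but one must also rule out configurations where, say, two paths of unequal length both attach at $v_2$ in a way not captured by the ``as equal as possible'' clause, or where no branching vertex exists (the cases $c=n-3$, where each of $v_0,v_1,v_2$ carries one path). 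I would handle the branchless case separately, using Lemma~\ref{AB} or Remark~\ref{Rem:pendantSingleVertex} to force equal-as-possible path lengths directly, and then observe that the resulting graph is again $\Delta_{n,c}$ under the stated indexing.
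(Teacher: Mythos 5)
Your proposal is correct and follows essentially the same route as the paper: split into the branchless case ($n-c=3$, where Proposition~\ref{Prop:almostequallength}(i) applied to the pairwise-adjacent triangle vertices pins down the orders) and the case where $v_2$ is the branching vertex (where Propositions~\ref{Prop:almostequallength} and~\ref{branchminorder} combine to give $n_2'-1\leq n_0,n_1\leq n_2$ and $n_2\leq n_2'\leq n_2+1$, forcing the $\Delta_{n,c}$ configuration). The only minor quibble is that for the branchless case the relevant tool is Lemma~\ref{PathOrder} (via Proposition~\ref{Prop:almostequallength}(i)) rather than Lemma~\ref{AB} or Remark~\ref{Rem:pendantSingleVertex}, which concern paths meeting at a common vertex.
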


\begin{proof}
Denote by $n_0,n_1$ the orders of the pendant paths $G_0,G_1$ at $v_0,v_1$, respectively.
	
Case~1: $G$ has no branching vertex.
	
Then $G_2$ is also a pendant path at $v_2$, and thus $n-c=3$. So we can assume that $|V(G_2)|\geq n_1,n_0$. By Proposition~\ref{Prop:almostequallength}, $n_1,n_0 \geq |V(G_2)| -1$. Hence $G$ is isomorphic to $\Delta_{n,c}$.
	
Case~2: $v_2$ is the branching vertex of $G$.
	
Let $n_2$ (resp. $n_2'$) be the minimum (resp. maximum) order among the pendant paths at $v_2$. Then Propositions~\ref{Prop:almostequallength} and~\ref{branchminorder} yield $n_2'-1\leq n_0,n_1 \leq n_2$ and $n_2\leq n_2' \leq n_2+1$. These inequalities imply that $n_0=n_1 =n_2$ if $n_2'=n_2+1$, and $n_0,n_1\in \{n_2-1,n_2\}$ if $n_2'=n_2$. Hence $G$ is isomorphic to $\Delta_{n,c}$.
\end{proof}

\subsection{Maximal graphs with girth $4$}\label{Subsec:girth4structure}
We require a final lemma prior to characterising those maximal graphs with girth $4$.

\begin{lemma}\label{Twopendantgirth4Order}
Assume that the girth of $G$ is $4$, and that $G$ has a branching vertex. Then there are precisely two pendant paths at $v_2$. Moreover, it holds that $$|V(G_3)|=|V(G_1)|=n_2,$$ where $n_2$ is the minimum order among the pendants paths at $v_2$.
\end{lemma}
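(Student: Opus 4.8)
The plan is to combine the structural information already accumulated with the reflection symmetry of the square. Write $k\ge 2$ for the number of pendant paths at the branching vertex $v_2$ and $a_1,\dots,a_k$ for their orders, so that $n_2=\min_i a_i\ge 2$, the last bound because every leg of a branching vertex has order at least $2$. Since a connected induced subgraph of the spider $G_2$ containing $v_2$ is obtained by choosing, independently on each leg, an initial segment issuing from $v_2$, one has $\N(G_2)_{v_2}=\prod_{i=1}^{k} a_i\ge n_2^{\,k}\ge n_2^{2}$. Because $G_1$ and $G_3$ are pendant paths at the two neighbours $v_1,v_3$ of $v_2$ on $C$, Proposition~\ref{branchminorder} gives $\N(G_1)_{v_1}=|V(G_1)|\le n_2$ and $\N(G_3)_{v_3}=|V(G_3)|\le n_2$.

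The crucial step is to produce two, rather than one, of the girth-$4$ inequalities. Definition~\ref{FirstSummary} already records $\N(G_2)_{v_2}\le \N(G_3)_{v_3}\bigl(1+\N(G_1)_{v_1}\bigr)$. Applying the same reasoning to the relabelling $(v_0,v_1,v_2,v_3)\mapsto(v_0,v_3,v_2,v_1)$ — which fixes the max/branching vertex $v_2$ and merely interchanges its two cycle-neighbours — and invoking Lemma~\ref{Girth3or4} (whose part~i) guarantees the competing graph remains in $\mathcal{U}(n,c)$, since $G$ is not a cycle) together with the maximality of $G$, also yields $\N(G_2)_{v_2}\le \N(G_1)_{v_1}\bigl(1+\N(G_3)_{v_3}\bigr)$.

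With both inequalities in hand the conclusion falls out cleanly. Suppose $|V(G_1)|=\N(G_1)_{v_1}\le n_2-1$. Then, feeding $\N(G_3)_{v_3}\le n_2$ into the second inequality,
\[
n_2^{2}\le \N(G_2)_{v_2}\le \N(G_1)_{v_1}\bigl(1+\N(G_3)_{v_3}\bigr)\le (n_2-1)(n_2+1)=n_2^{2}-1,
\]
a contradiction; hence $|V(G_1)|=n_2$, and by the symmetric argument $|V(G_3)|=n_2$. Finally, substituting $\N(G_1)_{v_1}=\N(G_3)_{v_3}=n_2$ into the first inequality gives $n_2^{\,k}\le \N(G_2)_{v_2}\le n_2(1+n_2)$, so $n_2^{\,k-1}\le n_2+1$; since $n_2\ge 2$ forces $n_2^{2}\ge 2n_2>n_2+1$, this fails for $k\ge 3$, leaving $k=2$, i.e.\ precisely two pendant paths at $v_2$.

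I expect the main obstacle to be the justification of the second inequality: one must check that the reflection genuinely satisfies the hypotheses of Lemma~\ref{Girth3or4} (the maximum is still attained at the relabelled $v_2$) and that the transformed graph still has exactly $c$ cut vertices, so that it is a legitimate competitor against the maximal $G$. The remaining arithmetic, built on the product formula $\N(G_2)_{v_2}=\prod_i a_i$ and the bound $\N(G_2)_{v_2}\ge n_2^{2}$, is routine.
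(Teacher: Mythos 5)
Your proof is correct and follows essentially the same route as the paper: both rest on the product formula $\N(G_2)_{v_2}=\prod_i a_i\ge n_2^{\,k}$, the bounds $n_1,n_3\le n_2$ from Proposition~\ref{branchminorder}, and the girth-$4$ inequality of Lemma~\ref{Girth3or4}/Definition~\ref{FirstSummary}. The only cosmetic differences are that you make the reflected inequality $\N(G_2)_{v_2}\le \N(G_1)_{v_1}(1+\N(G_3)_{v_3})$ explicit where the paper achieves the same effect with a ``without loss of generality $n_1\ge n_3$'' relabelling, and you settle $n_1=n_3=n_2$ before ruling out $k\ge 3$ rather than after.
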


\begin{proof}
Denote by $n_0,n_1,n_3$ the orders of the pendant paths at $v_0,v_1,v_3$, respectively. By Proposition~\ref{Prop:almostequallength}, $|n_j-n_{j+1}|\leq 1$ for all $j\in \{0,1,2,3\}$, where $n_4=n_0$. So we have
\begin{align*}
\N(G_2)_{v_2}\geq n_2^2\geq n_2+2 \geq n_0,n_1,n_3\,,
\end{align*}
and thus $\N(G_2)_{v_2}=\max_{0\leq j \leq 3} \N(G_j)_{v_j}$. Also recall that  $n_1,n_3\leq n_2$ by virtue of Proposition~\ref{branchminorder}. Therefore, if there are more than two pendant paths at $v_2$, then $$\N(G_2)_{v_2}\geq n_2^3>n_2 (1+n_2)\geq n_3(1+n_1)=\N(G_3)_{v_3} (1+\N(G_1)_{v_1}).$$ However, this strict inequality is impossible (see the summary in Definition~\ref{FirstSummary}). Hence there are only two pendant paths at $v_2$.

Now assume that there are only two pendant paths at $v_2$, and without loss of generality, say $n_1\geq n_3$. Recall that $n_2-1 \leq n_3 \leq n_2$ and $n_3\leq n_1 \leq n_2$. If $n_3=n_2-1$, then $$\N(G_2)_{v_2}\geq n_2^2>(n_2-1)(1+n_2)\geq n_3(1+n_1)=\N(G_3)_{v_3} (1+\N(G_1)_{v_1}),$$ which is again a contradiction to the choice of $G$ (see Definition~\ref{FirstSummary}). Hence we must have $n_3= n_2$ and thus $n_1=n_2$. This completes the proof of the lemma.
\end{proof}

The following description for the graphs $\Omega_{n,n-4}$  and $\Omega_{n,n-5}$ are also given in Section~\ref{Intro:main}.

\begin{definition}
Define the graphs $\Omega_{n,n-4}$ and $\Omega_{n,n-5}$ as follows:
\begin{itemize}
	\item Set $m=\lfloor n/4 \rfloor$ and let $r$ be the residue of $n$ modulo $4$. Then $\Omega_{n,n-4}$ is the graph with order $n$ and $n-4$ cut vertices constructed from the square $v_0v_1v_2v_3$ by attaching the pendant paths of orders $m_0,m_1,m_2,m_3$ at $v_0,v_1,v_2,v_3$, respectively, where $(m_0,m_1,m_2,m_3)$ is equal to
	\begin{align*}
	(m,m,m,m),(m+1,m,m,m),(m+1,m+1,m,m),(m+1,m+1,m+1,m)
	\end{align*}
	when $r$ is equal to $0,1,2,3$, respectively.
	\item Let $n>7$ such that $n+k=5m$ for some integer $m>1$ and some $k\in \{0,1,2\}$. Then $\Omega_{n,n-5}$ is the graph with order $n$ and $n-5$ cut vertices constructed from the square $v_0v_1v_2v_3$ by attaching the pendant paths of orders $m_0,m_1,m_2,m_3$ at $v_0,v_1,v_2,v_3$, respectively, and another pendant of order $m$ at $v_2$, where $(m_0,m_1,m_2,m_3)$ is equal to
	\begin{align*}
	(m,m,m+1,m),(m,m,m,m),(m-1,m,m,m)
	\end{align*}
	when $k$ is equal to $0,1,2$, respectively.
\end{itemize}
\end{definition}

\begin{proposition}\label{Prop:Omega4}
If the girth of $G$ is $4$, then $G$ is isomorphic to $\Omega_{n,n-5}$ or $\Omega_{n,n-4}$.
\end{proposition}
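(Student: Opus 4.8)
The plan is to split on whether the maximal graph $G$ of girth $4$ has a branching vertex, using that by Propositions~\ref{Prop:onebranchvertex} and~\ref{branchvertcycle} such a vertex, if present, is unique and lies on the square $C=v_0v_1v_2v_3$ labelled as in Definition~\ref{FirstSummary}. If $G$ has no branching vertex, then $G_0,G_1,G_2,G_3$ are all pendant paths, so $G$ has exactly four leaves and $c=n-4$. If $v_2$ is the branching vertex, Lemma~\ref{Twopendantgirth4Order} gives exactly two pendant paths at $v_2$ and $|V(G_1)|=|V(G_3)|=n_2$, where $n_2$ is the smaller of the two orders at $v_2$; then $G$ has exactly five leaves and $c=n-5$. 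So the two cases are destined to produce $\Omega_{n,n-4}$ and $\Omega_{n,n-5}$ respectively, and everything reduces to determining the path orders.

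In the branch-free case, write $m_0,m_1,m_2,m_3$ for the four orders, so $m_0+m_1+m_2+m_3=n$. Proposition~\ref{Prop:almostequallength}(i) forces $m_j\in\{m,m+1\}$ with $m=\lfloor n/4\rfloor$, exactly $r=n\bmod 4$ of them equal to $m+1$. The decisive simplification comes from Lemma~\ref{Formula3or4}: since each $G_j$ is a path, $\N(G_j)_{v_j}=m_j$ and $\N(G_j-v_j)=\binom{m_j}{2}$, so both $\sum_j\binom{m_j}{2}$ and $\prod_j(1+m_j)$ are symmetric in the $m_j$ and hence unaffected by how the values are placed around $C$; the only placement-dependent term is $-(1+m_0m_2+m_1m_3)$. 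Thus maximising $\N(G)$ amounts to minimising $m_0m_2+m_1m_3$, the sum of the two products over opposite corners. A short check over $r\in\{0,1,2,3\}$ singles out exactly the placement of $\Omega_{n,n-4}$; the only genuine choice is $r=2$, where the adjacent placement of the two $(m+1)$-paths yields $2m(m+1)$ against $2m^2+2m+1$ for the diagonal one and therefore wins.

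In the branching case, let $n_2\le n_2'\le n_2+1$ be the two orders at $v_2$ (the upper bound is Proposition~\ref{Prop:almostequallength}(ii)), so that $n_1=n_3=n_2$ by Lemma~\ref{Twopendantgirth4Order}, and let $n_0$ be the order at $v_0$, constrained by Proposition~\ref{Prop:almostequallength}(i) to $|n_0-n_2|\le1$. Counting vertices gives $n_0+3n_2+n_2'=n+1$, which with $n_2'\in\{n_2,n_2+1\}$ and $n_0\in\{n_2-1,n_2,n_2+1\}$ leaves at most two feasible triples $(n_0,n_2,n_2')$ for each $n$. I would evaluate $\N(G)$ on these through Lemma~\ref{Formula3or4}, using $\N(G_2)_{v_2}=n_2n_2'$ and $\N(G_2-v_2)=\binom{n_2}{2}+\binom{n_2'}{2}$; the difference collapses to a single sign and always favours the balanced triple: when $n+1=5n_2$ the triple $(n_2,n_2,n_2)$ beats $(n_2-1,n_2,n_2+1)$ by $n_2(n_2+1)$, and when $n+1=5n_2+1$ the triple $(n_2,n_2,n_2+1)$ beats $(n_2+1,n_2,n_2)$ by $(1+n_2)^2(n_2-1)$. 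Reading off the winners against the definition identifies $G$ with $\Omega_{n,n-5}$ exactly in the residues $n\equiv0,3,4\pmod 5$, which are precisely those for which $\Omega_{n,n-5}$ is defined.

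The residues $n\equiv1,2\pmod 5$ are where $\Omega_{n,n-5}$ is undefined, and handling them is the step I expect to be the main obstacle, since the argument must now show that no maximal graph of girth $4$ with a branching vertex occurs at all. For $n\equiv2$ the equation $n_0+3n_2+n_2'=n+1$ forces $n+1\in[5n_2-1,5n_2+2]$, which is impossible because $n+1\equiv3\pmod 5$ while that window omits the residue $3$; so the configuration is infeasible. For $n\equiv1$ the only feasible triple is $(n_2+1,n_2,n_2+1)$, for which $\N(G_2)_{v_2}=n_2(n_2+1)=\N(G_3)_{v_3}\bigl(1+\N(G_1)_{v_1}\bigr)$; this is exactly the equality case of Lemma~\ref{Girth3or4}(iii), so its girth-$3$ reduction $G'$ (which has $c(G')=c(G)$ by Lemma~\ref{Girth3or4}(i), as $G$ is not a cycle) satisfies $\N(G')=\N(G)$. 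If $G$ were maximal then $G'$ would be a maximal girth-$3$ graph, hence $G'\cong\Delta_{n,n-5}$ by Proposition~\ref{Prop:Delta3}; but $G'$ carries the branching orders $\{n_2,n_2+1,n_2+1\}$ instead of the three largest orders $\{n_2+1,n_2+1,n_2+1\}$ carried by $\Delta_{n,n-5}$, so $G'\not\cong\Delta_{n,n-5}$, a contradiction. Thus a maximal graph of girth $4$ arises only in the cases already settled, and the proposition follows.
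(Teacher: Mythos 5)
Your overall architecture matches the paper's: split on the existence of a branching vertex, use Lemma~\ref{Twopendantgirth4Order} to reduce the branching case to a one\nobreakdash-parameter family, compare candidates via Lemma~\ref{Formula3or4}, and dispose of the equality case $n_2'=n_2+1$ through the girth\nobreakdash-$3$ reduction of Lemma~\ref{Girth3or4} and Proposition~\ref{Prop:Delta3}. Your branching-case bookkeeping ($n_0+3n_2+n_2'=n+1$, the residue count, and the two explicit differences $n_2(n_2+1)$ and $(1+n_2)^2(n_2-1)$) checks out and agrees with the paper's computations, and your treatment of $n\equiv 1,2\pmod 5$ is sound.

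There is, however, one genuine gap, in the branch-free case. You assert that Proposition~\ref{Prop:almostequallength}(i) ``forces $m_j\in\{m,m+1\}$.'' It does not: that proposition constrains only pendant paths at \emph{adjacent} vertices of $C$, so opposite orders may a priori differ by $2$. Concretely, when $4\mid n$ the configuration $(m_0,m_1,m_2,m_3)=(m+1,m,m-1,m)$ satisfies every adjacency constraint and has the right vertex count, yet its multiset of orders is not $\{m,m,m,m\}$. Your subsequent argument --- that $\sum_j\binom{m_j}{2}$ and $\prod_j(1+m_j)$ are symmetric so only $m_0m_2+m_1m_3$ matters --- compares \emph{placements of a fixed multiset} and is blind to this competitor with a different multiset. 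The paper devotes a separate Claim ($n_2\neq n_0-2$, proved via Lemma~\ref{DeleteAddLeaf}) precisely to exclude it. The gap is easily repaired inside your own framework: Lemma~\ref{Formula3or4} gives
\begin{align*}
\N\bigl(G_{(m,m,m,m)}\bigr)-\N\bigl(G_{(m+1,m,m-1,m)}\bigr)=(1+m)^2-2>0\,,
\end{align*}
but as written the step is missing, and the claim you attribute to Proposition~\ref{Prop:almostequallength}(i) is false as stated. Aside from this, your route differs from the paper's only in flavour: you replace the paper's explicit decomposition comparing the ``adjacent'' and ``diagonal'' placements $H_1,H_2$ by the cleaner observation that only the term $m_0m_2+m_1m_3$ is placement-dependent, which is a nice simplification once the multiset has been pinned down.
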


\begin{proof}
Denote by $n_0,n_1,n_3$ the orders of the pendant paths $G_0,G_1,G_3$ at $v_0,v_1,v_3$, respectively.

Case 1: $G$ has no branching vertex.

Then $G_2$ is also a pendant path at $v_2$, and thus $c=n-4$. So we can assume that $ n_0 \geq n_1,|V(G_2)|,n_3$, and thus $n_0>1$. Set $n_2=|V(G_2)|$ and note that by Proposition~\ref{Prop:almostequallength},
\begin{align*}
n_0-1 \leq n_1,n_3 \leq n_0 \quad \text{and} \quad n_0-2 \leq n_1-1 \leq n_2 \leq n_0\,.
\end{align*}
{\sc Claim:} $n_2 \neq n_0-2$.

Suppose to the contrary that $n_2= n_0-2$. Then $n_0>2$. Let $u$ be the free endvertex of the pendant path $G_0$ at $v_0$, and $y$ the neighbour of $u$ in $G$. Likewise, let $x$ be the free endvertex of the pendant path $G_2$ at $v_2$. Delete the edge $uy$ and add the new edge $ux$ to obtain the graph $G'$. Then Lemma~\ref{DeleteAddLeaf} yields 
\begin{align*}
c(G')=c(G) \quad \text{and} \quad \N(G') -\N(G)=\N(G-u)_x-\N(G-u)_y\,.
\end{align*}
Let us estimate the difference $\N(G-u)_x-\N(G-u)_y$. To achieve this, set $L=(G-u)-V(G_2-v_2)$ and $L'=G-V(G_0 - v_0)$. Since $n_0-2=n_2$, one notices that $\N(L')_{v_0}=\N(L-y)_{v_2}$. Moreover, it holds that
\begin{align*}
\N(G-u)_x&=(n_0-3) + \N(L)_{v_2}\,, \quad \N(G-u)_y=(n_0-2) + \N(L')_{v_0}\,,\\
\N(L)_{v_2}&=\N(L-y)_{v_2} + \N(L)_{v_2,y}\,,
\end{align*}
from which we deduce that $\N(G-u)_x-\N(G-u)_y = \N(L)_{v_2,y}- 1>0$. This contradiction proves the claim.

Hence we must have $n_0-1 \leq n_1,n_2, n_3 \leq n_0$. Then the sequence $(n_0,n_1,n_2,n_3)$ consists of at most two distinct values. Therefore, this sequence defines $G$ uniquely unless $n_0$ appears exactly two times, in which case there are only two possibilities for $G$. Assume that $|V(G_j)|=n_0$ for some $j\neq 0 $. Let $H_1$ (resp. $H_2$) be the graph corresponding to the situation where $v_j$, say $v_1$ and $v_0$ are adjacent (resp. $v_j$ and $v_0$ are not adjacent) in $G$. We show that $\N(H_1) > \N(H_2)$.

Denote by $u_1$ (resp. $u_2$) the free endvertex of the pendant path $G_1$ at $v_1$ in $H_1$ (resp. the pendant path $G_2$ at $v_2$ in $H_2$). Then $H_1-u_1$ and $H_2-u_2$ are isomorphic graphs. By setting $L_1=H_1-V(G_1-v_1)$ and $L_2=H_2-V(G_2-v_2)$, we get the decomposition
\begin{align*}
\N(L_1)_{v_1,v_2,v_3,v_0}&=n_0(n_0-1)^2\,, \quad \N(L_1-v_0)_{v_1,v_2,v_3}=(n_0-1)^2\,, \\
\N(L_1-v_3)_{v_1,v_2}&=(n_0-1)(n_0+1)\,, \quad \N(L_1-v_2)_{v_1}=1+ n_0(1+n_0-1)
\end{align*}
for $\N(L_1)_{v_1}$, and the decomposition
\begin{align*}
\N(L_2)_{v_2,v_3,v_0,v_1}&=n_0(n_0-1)^2\,, \quad \N(L_2-v_1)_{v_2,v_3,v_0}=n_0(n_0-1)\,, \\
\N(L_2-v_0)_{v_2,v_3}&=(n_0-1)(1+n_0-1)\,, \quad \N(L_2-v_3)_{v_2}=1+ (n_0-1)(1+n_0)
\end{align*}
for $\N(L_2)_{v_2}$. Direct calculations show that $\N(L_1)_{v_1} - \N(L_2)_{v_2}=1$. On the other hand, 
\begin{align*}
\N(H_1)_{u_1}=(n_0-1)+\N(L_1)_{v_1}\,, \quad \N(H_2)_{u_2}=(n_0-1)+\N(L_2)_{v_2}\,,
\end{align*}
and these identities imply $$\N(H_1)-\N(H_2)=\N(H_1)_{u_1} -\N(H_2)_{u_2}=\N(L_1)_{v_1} - \N(L_2)_{v_2}=1>0.$$ Hence $G$ must be isomorphic to $H_1$. It is easy to see that $H_1$ is isomorphic to $\Omega_{n,n-4}$.

Case 2: $v_2$ is the branching vertex of $G$.

Denote by $n_2$ (resp. $n_2'$) the minimum (resp. maximum) order among the pendant paths at $v_2$. Then $1<n_2 \leq n_2'$. By Lemma~\ref{Twopendantgirth4Order} there are precisely two pendant paths at $v_2$, and $n_3=n_1=n_2$. Thus $c=n-5$. By Definition~\ref{FirstSummary} it holds that $$n_2\cdot n_2'=\N(G_2)_{v_2}\leq \N(G_3)_{v_3}(1+\N(G_1)_{v_1})=n_2(1+n_2),$$ that is $n_2\leq n_2'\leq 1+ n_2$. Moreover, if $G'$ is the graph constructed from $G$ in Lemma~\ref{Girth3or4} (see Subsection~\ref{Subsec:girth3or4}), then $\N(G)= \N(G')$ holds if and only if $n_2'= 1+ n_2$.
\begin{itemize}
\item Assume that $n_2'= 1+ n_2$. Then $G'$ must be isomorphic to $\Delta_{n,n-5}$ by virtue of Proposition~\ref{Prop:Delta3}. In particular, we get $n_0=n_2$. Therefore $n=5n_2$ and $G$ is isomorphic to the graph $\Omega_{n,n-5}$.
\item Assume that $n_2'= n_2$. Proposition~\ref{Prop:almostequallength} yields $n_2-1\leq n_0 \leq n_2+1$. If $n_0 = n_2+1$, then $n=5n_2$ and direct calculations give $$\N(\Omega_{n,n-5}) - \N(G)=(n_2+1)^2(n_2-1)>0,$$ a contradiction to the choice of $G$. If $n_0=n_2$, then $n=5n_2-1$ and $G$ is isomorphic to $\Omega_{n,n-5}$. If $n_0=n_2-1$, then $n=5n_2-2$ and $G$ is also isomorphic to $\Omega_{n,n-5}$.
\end{itemize}
This completes the proof of the proposition.
\end{proof}

We are now prepared to prove our main theorem.

\subsection{Proof of the main theorem}\label{Subsec:maintheorem}
In this subsection, we present a proof of our main theorem. We first recall the main result stated in Section~\ref{Intro:main}.
\begin{theorem}\label{MainTheoremFinal}
Let the integers $n>3$ and $0< c<n-2$ be given, and $G\in \mathcal{U}(n,c)$ such that $\N(G)\geq \N(H)$ for all $H\in \mathcal{U}(n,c)$. Then the following hold:
\begin{itemize}
	\item $G$ is only isomorphic to $\Delta_{n,c}$ if $c=n-3$ or $c<n-5$;
	\item $G$ is only isomorphic to $\Omega_{n,c}$ if $c=n-4>1$, or $c=n-5>3$ and $n=3,4\mod 5$;
	\item $G$ is isomorphic to both $\Delta_{n,c}$ and $\Omega_{n,c}$ if $c=n-5=3$, or $c=n-4 =1$, or
	\begin{align*}
	\textrm{
		$c=n-5>0$ and $ n=0\mod 5$.}
	\end{align*}
\end{itemize}

\end{theorem}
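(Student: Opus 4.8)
The plan is to use the structural results already established to collapse the problem to a comparison between two explicit families, and then to settle that comparison by direct computation. By Proposition~\ref{Prop:Girth3or4} every maximal graph has girth $3$ or $4$; Proposition~\ref{Prop:Delta3} shows that a girth-$3$ maximal graph is $\Delta_{n,c}$, and Proposition~\ref{Prop:Omega4} shows that a girth-$4$ maximal graph is $\Omega_{n,n-4}$ or $\Omega_{n,n-5}$. The first thing I would record is which cut-vertex counts each family can attain: $\Delta_{n,c}$ lies in $\mathcal{U}(n,c)$ for every admissible $0<c\le n-3$, whereas by construction the girth-$4$ candidates lie in $\mathcal{U}(n,c)$ only for $c\in\{n-4,n-5\}$, and $\Omega_{n,n-5}$ is defined only for $n\equiv 0,3,4 \pmod 5$. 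Hence whenever $c=n-3$, or $c<n-5$, or $c=n-5$ with $n\equiv 1,2\pmod 5$, no girth-$4$ competitor exists, forcing $G\cong\Delta_{n,c}$ and settling the first bullet.

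In the two genuinely competitive regimes $c=n-4$ and $c=n-5$ I must compare $\N(\Delta_{n,c})$ with $\N(\Omega_{n,c})$. Every branch of both graphs is a pendant path, so for a pendant path $P$ of order $k$ rooted at a cycle vertex $v$ we have $\N(P)_v=k$ and $\N(P-v)=\binom{k}{2}$. Substituting these into the two master formulas of Lemma~\ref{Formula3or4} turns $\N(\Delta_{n,c})$ and $\N(\Omega_{n,c})$ into explicit polynomials in the path orders; since those orders are balanced (Propositions~\ref{Prop:almostequallength} and~\ref{branchminorder}), the polynomials depend only on $\lfloor n/4\rfloor$ or $\lfloor n/5\rfloor$ together with the residue $n\bmod 4$ or $n\bmod 5$, and the sign of $\N(\Omega_{n,c})-\N(\Delta_{n,c})$ can be read off residue by residue.

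Where it applies, a cleaner route is Lemma~\ref{Girth3or4} iii): opening the square of $\Omega_{n,c}$ into a triangle $G'$ as in that lemma gives
$$\N(\Omega_{n,c})-\N(G')=\N(G_0)_{v_0}\bigl(\N(G_3)_{v_3}(1+\N(G_1)_{v_1})-\N(G_2)_{v_2}\bigr),$$
and the inequality recorded in Definition~\ref{FirstSummary} makes the bracket nonnegative, with equality exactly when $\N(G_2)_{v_2}=\N(G_3)_{v_3}(1+\N(G_1)_{v_1})$. Writing the two pendant paths at $v_2$ with orders $n_2\le n_2'\le n_2+1$ (Lemma~\ref{Twopendantgirth4Order}), this equality is precisely $n_2'=n_2+1$, which I expect to single out the class $n\equiv0\pmod5$ (for $c=n-5$) and the value $n=5$ (for $c=n-4$) as the ties, the strict case $n_2'=n_2$ then giving $\Omega_{n,c}$ as the unique optimum. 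The essential caveat is that this argument only proves $\N(\Omega_{n,c})\ge\N(G')$, which decides the theorem solely when $G'$ is itself the \emph{balanced} optimum $\Delta_{n,c}$.

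That caveat is the main obstacle. A short inspection of the opened path lengths shows $G'\cong\Delta_{n,c}$ for $n\equiv0,4\pmod5$ but not for $n\equiv3\pmod5$ (nor for $c=n-4$ in general), the opened lengths then spanning a range of $2$ rather than $1$; in those cases I must revert to the direct polynomial comparison of the second paragraph. The same phenomenon produces the two exceptional ties: the boundary values $c=n-4=1$ (that is, $n=5$) and $c=n-5=3$ (that is, $n=8$, with $n\equiv3\pmod5$) have to be computed by hand from Lemma~\ref{Formula3or4} and turn out to be equalities, while every larger $c$ in the same residue class gives a strict inequality. The delicate part throughout is that the sign of $\N(\Omega_{n,c})-\N(\Delta_{n,c})$ hinges on whether the two pendant paths at the branching vertex have equal orders or differ by one, so the residue accounting must be carried out with care.
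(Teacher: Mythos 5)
Your proposal is correct and follows essentially the same route as the paper: reduce via Propositions~\ref{Prop:Delta3} and~\ref{Prop:Omega4} to a comparison of $\N(\Delta_{n,c})$ with $\N(\Omega_{n,c})$, use Lemma~\ref{Girth3or4}~iii) in the residue classes where the opened graph $G'$ coincides with $\Delta_{n,c}$, and fall back on direct computation via Lemma~\ref{Formula3or4} for the remaining residues, which is exactly how the paper isolates the ties at $n\equiv 0\pmod 5$, $n=5$ and $n=8$. Your explicit caveat that the lemma only compares $\Omega_{n,c}$ with $G'$ rather than with $\Delta_{n,c}$ directly is the right subtlety to flag and is handled the same way (if more tersely) in the paper.
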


\begin{proof}
Since the girth of $G$ is $3$ or $4$, it suffices to compare $\N(\Delta_{n,c})$ with $\N(\Omega_{n,c})$ depending on the values of $n$ and $c$; see Propositions~\ref{Prop:Delta3} and~\ref{Prop:Omega4}. Recall that $\Omega_{n,c}$ has precisely $c=n-5$ or $c=n-4$ cut vertices.

Case~1: $c=n-3$ or $c<n-5$. 

Clearly, $G$ is only isomorphic to $\Delta_{n,c}$.

Case~2: $c=n-4>0$.

If $n=4m$ for some integer $m>1$, or $n=4m+3$ for some integer $m>0$, then the case $g=4$ in Lemma~\ref{Girth3or4} gives us $\N(\Delta_{n,n-4})< \N(\Omega_{n,n-4})$. Thus $G$ is only isomorphic to $\Omega_{n,n-4}$ in this case. On the other hand, using Lemma~\ref{Formula3or4} (see Subsection~\ref{Subsec:girth3or4}) direct calculations show that $$\N(\Omega_{n,n-4})-\N(\Delta_{n,n-4})=m(m^2+m-1)>0$$ if $n=4m+2$ for some integer $m>0$, and $$\N(\Omega_{n,n-4})-\N(\Delta_{n,n-4})=m(m-1)(m+1)$$ if $n=4m+1$ for some integer $m>0$. The latter identity shows that $\N(\Omega_{n,n-4})>\N(\Delta_{n,n-4})$ for $m>1$, and $\N(\Omega_{n,n-4})=\N(\Delta_{n,n-4})$ for $m=1$.

Case~3: $c=n-5>0$.

Then we infer from the case $g=4$ in Lemma~\ref{Girth3or4} that $\N(\Delta_{n,n-5})=\N(\Omega_{n,n-5})$ if $n=5m$ for some integer $m>1$, and $\N(\Delta_{n,n-5})< \N(\Omega_{n,n-5})$ if $n=5m-1$ for some integer $m>1$. Using Lemma~\ref{Formula3or4}, direct calculations yield $$\N(\Omega_{n,n-5})-\N(\Delta_{n,n-5})=m(m-2)\geq 0$$ if $n=5m-2$ for some integer $m>1$. The latter identity shows that $\N(\Omega_{n,n-5})>\N(\Delta_{n,n-5})$ for $m>2$, and $\N(\Omega_{n,n-5})=\N(\Delta_{n,n-5})$ for $m=2$.

This completes the proof of the theorem.
\end{proof}

\begin{remark}
Note that explicit expressions for $\N(\Delta_{n,c})$ and $\N(\Omega_{n,c})$ can easily be obtained using Lemma~\ref{Formula3or4}.
\end{remark}

\section{Concluding remarks}\label{Sec:Conclude}
The Wiener index of a connected graph is the sum of distances between all unordered vertex pairs. It is the oldest and also the most studied index among the so-called topological indices in mathematical chemistry~\cite{Entringer1976,knor2015mathematical,Wiener1947}.

It is remarkable that for the case $c=n-3$ or $c<n-5$, the same graph $\Delta_{n,c}$ minimises the Wiener index among all graphs in $\mathcal{U}(n,c)$; see~\cite[Theorem~4.7]{tan2017wiener}. It is notable, however, that this negative correlation between the Wiener index and the number of connected induced subgraphs does not extend to the other remaining cases of $n,c$. For example, if $c=n-5>3$ and $n=3,4\mod 5$, then $\Delta_{n,c}$ uniquely minimises the Wiener index~\cite[Theorem~4.7]{tan2017wiener}, while $\Omega_{n,c}$ uniquely maximises the number of connected induced subgraphs (see Theorem~\ref{MainTheoremFinal}). This is in contrast with other topological indices~\cite{WagnerCorr}.

\medskip
Naturally, what remains for further study is the analogous minimisation problem:

\begin{problem}
Characterise those graphs in $\mathcal{U}(n,c)$ with the smallest number of connected induced subgraphs.
\end{problem}
The case $c=0$ is trivial, while the case $c=1$ is easy.

\end{document}